\def\urlfont{\DeclareFontFamily{OT1}{cmtt}{\hyphenchar\font='057}
              \normalfont\ttfamily \hyphenpenalty=10000}
\DeclareFontFamily{OT1}{rsfs10}{}
\DeclareFontShape{OT1}{rsfs10}{m}{n}{ <-> rsfs10 }{}
\DeclareMathAlphabet{\mathscript}{OT1}{rsfs10}{m}{n}
\DeclareMathOperator{\im}{Im}       
\DeclareMathOperator{\id}{id}       
\DeclareMathOperator{\Spec}{Spec}   
\DeclareMathOperator{\Ext}{Ext}     
\DeclareMathOperator{\ext}{\mathcal{E}\it{x}\hskip1pt \it{t}\hskip1pt} 
\DeclareMathOperator{\hm}{\mathcal{H}\it{o}\hskip1pt \it{m}\hskip1pt} 
\DeclareMathOperator{\Exc}{Exc}     
\DeclareMathOperator{\Pic}{Pic}     
\DeclareMathOperator{\Sing}{Sing}   
\DeclareMathOperator{\Def}{Def}     
\def \a{\alpha }
\def\so{{S _0}}
\def\P{{\mathbb{P}}}
\def\p2{\mathbb{P}^2}
\def\p3{\mathbb{P}^3}
\def\p4{\mathbb{P}^4}
\def\su{\operatorname{SU}}
\def\so{\operatorname{SO}}
\def\A{\mathbb{A}}
\def\Z{\mathbb{Z}}
\def\C{\mathbb{C}}
\def\R{\mathbb{R}}
\def\Q{\mathbb{Q}}
\def\T{\mathbb{T}}
\theoremstyle{plain}
\newtheorem{theorem}{Theorem}
\newtheorem{corollary}[theorem]{Corollary}
\newtheorem{proposition}[theorem]{Proposition}
\theoremstyle{remark}
\newtheorem{remark}[theorem]{Remark}
\newtheorem{remarks}[theorem]{Remarks}
\newtheorem{conjecture}[theorem]{Conjecture}
\newtheorem{example}[theorem]{Example}
\newtheorem{examples}[theorem]{Examples}
\theoremstyle{definition}
\newtheorem{definition}[theorem]{Definition}
\newcommand{\cy}{Ca\-la\-bi--Yau }
\newcommand{\ka}{K\"{a}hler }
\title[Deforming Geometric Transitions]{Deforming Geometric Transitions}
\author[Michele Rossi]{Michele Rossi}
\dedicatory{Dedicated to Professor D. Gallarati on the occasion of his 90th birthday}
\thanks{This research was partially supported by the MIUR-PRIN ``Geometria delle Variet\`{a} Algebriche'' Research Funds.}
\address{Dipartimento di Matematica, Universit\`a di Torino,
via Carlo Alberto 10, 10123 Torino} \email{michele.rossi@unito.it}
\begin{document}

\begin{abstract}
After a quick review of the wild structure of the complex moduli space of \cy threefolds and the role of geometric transitions in this context (the \cy web) the concept of \emph{deformation equivalence} for geometric transitions is introduced to understand the arrows of the Gross-Reid \cy web as deformation-equivalence classes of geometric transitions. Then the focus will be on some results and suitable examples to understand under which conditions it is possible to get \emph{simple} geometric transitions, which are almost the only well-understood geometric transitions both in mathematics and in physics.
\end{abstract}
\maketitle

\section{Introduction}	

The aim of this paper is that of extending to geometric transitions (see Definition \ref{def:gt}) the well-known concept of \emph{deformation equivalence} of complex manifolds. Geometric transitions have interesting applications both in mathematics, for the study of the wild structure of the moduli space of \cy varieties, and in physics, describing the transition between topologically distinct super-string mo\-dels of \cy vacua. Then deformation equivalence of geometric transitions and \cy 3--folds, seems to give a more direct relation between the string theoretic \cy web and the mathematical Gross-Reid \cy web (see sections \ref{s:string-web} and \ref{s:cy-web}, respectively).

\noindent A large first part of these notes (sections from 2 to 6) has a purely expository purpose. For a broader discussion of these aspects the interested reader is referred to the extensive survey on the subject \cite{R}. A second part (sections 7 and 8) is instead devoted to giving some new ideas, partial results and examples, with the aim of shedding a brighter light on the study of geometric transitions and, more generally, on the \cy moduli space.

\noindent\emph{Deformation equivalence of geometric transitions} is introduced in Definition \ref{def:def-eq_gt}, allowing us, on the one hand, to think of the Gross-Reid \cy web as a kind of quotient of the string theoretic \cy web by means of def-equivalence, and, on the other hand, to isolate a class of geometric transitions (referred to as \emph{simple}, see Definition \ref{def:simple_gt}) having the properties of being well understood both from the physical and the mathematical point of view: in particular \cy threefolds connected by a simple geometric transition turns out to admit the same fundamental group (see Remark \ref{rem:simple}). New results in this context are then given by Proposition \ref{prop:terminal} and Theorem \ref{thm:typeII}. The former gives an easier formulation of def-equivalence between geometric transitions admitting singular loci comprising at most isolated terminal singularities; this is the case e.g. for \emph{small} geometric transitions (see Definition \ref{def:small_gt}). The latter characterizes the \emph{type II} geometric transitions (see Definition \ref{def:typeII_gt}) as \emph{never simple} ones. From the physical point of view, probably the more interesting property of deformation equivalence of both \cy vacua and geometric transitions, is the fact that string theories ``passes through'' such an equivalence, meaning that main parameters of a physical theory must be sought among geometric def-equivalence invariants (see Remark \ref{rem:fisica}).

\noindent This paper ends up by studying when a small geometric transition is actually a simple one: examples of both simple and non-simple small geometric transitions are given. The main result in this context is given by Proposition \ref{prop:small&simple}, giving a necessary cohomological condition for small geometric transitions to be simple.

\section{Calabi--Yau varieties}

\begin{definition}\label{cy-def}
A compact, complex, \ka manifold $Y$ is a \emph{\cy variety} if
\begin{enumerate}
    \item $\bigwedge^{n} \Omega_{Y} =: \mathcal{K}_Y \cong \mathcal{O}_Y$
    \item $h^{p,0}(Y) = 0 \quad \forall 0<p<\dim Y$
\end{enumerate}
A n--dimensional \cy variety will be also called a \emph{\cy
n--fold}.
\end{definition}

\begin{remarks}{\rm
\begin{enumerate}
    \item The given definition of \cy
    variety includes the following lower dimensional cases
    \begin{itemize}
        \item smooth elliptic curves,
        \item smooth $K3$ surfaces.
    \end{itemize}

    \item Observe that although a smooth elliptic curve always admits a projective embedding, this is no longer the case for $K3$ surfaces. By the way, for $\dim Y \geq 3$, the given definition of a \cy variety $Y$ implies that $Y$ is a \emph{projective variety}: the embedding can be fixed by a suitable integer multiple of a rational \ka form near enough to the \ka metric of $Y$.

\end{enumerate}
}
\end{remarks}

\begin{examples}{\rm
\begin{enumerate}
    \item Smooth hypersurfaces of degree $n+1$ in $\P ^n$ (use Adjunction Formula and the Lefschetz Hyperplane Theorem).
    \item Smooth hypersurfaces (if exist!) of a weighted
    projective space $\P (q_0,\ldots,q_n)$ of degree
    $d=\sum_{i=0}^{n}q_i$.
    \item The general element of the anti--canonical system of a
    \emph{sufficiently good} 4--dimensional toric Fano variety (see \cite{Batyrev94}).
    \item Suitable complete intersections.... (iterate the previous
    examples).
    \item The double covering of $\P ^3$ ramified along a smooth
    surface of degree 8 in $\P ^3$ (octic double solid).
\end{enumerate}}
\end{examples}

In dimension greater than or equal to 3 the previous examples give \emph{topologically distinct} complex varieties, implying immediately that \emph{the complex moduli space of \cy $n$--folds, with $n\geq 3$, has to be necessarily disconnected}. This fact apparently clashes with the smaller dimensional cases:
\begin{itemize}
\item the complex moduli space of elliptic curves is given by the modular curve $\Gamma(1)\backslash \mathbb{H}\cong\A^1$ which parameterizes complex structures over the topological torus $S^1 \times S^1$,
\item after Kodaira \cite{Kodaira64}, the complex moduli space of $K3$ surfaces is given by a \emph{smooth, complex, irreducible space of dimension 20}.
\end{itemize}
Anyway if we insist on looking at this situation from the algebraic point of view, then the moduli space of algebraic $K3$ surfaces turns out to be a dramatically more complicated object: the following facts were known to F.~Enriques \cite{Enriques46}:
    \begin{itemize}
        \item $\forall g \geq 3$ there exists a K3 surface of
        degree $2g-2$ in $\P^g$; hence its sectional genus is $g$;
        \item $\forall g \geq 3$ we can obtain a space $\mathcal{M}_g$
        of complex projective moduli of such
        surfaces, by imposing a polarization: $\mathcal{M}_{g}$ is an irreducible, analytic
        variety with $\dim_{\C}\mathcal{M}_{g}=19$;
        \item then the complex moduli space $\mathcal{M}^{alg}$ of algebraic K3
        surfaces is a \emph{reducible} analytic variety and it
        admits a countable number of irreducible components;
        \item there exist K3 surfaces belonging to more than one
        irreducible component of $\mathcal{M}^{alg}$; anyway if we
        restrict to K3's admitting $\Pic \cong \Z$ (they give the
        general element of any irreducible component) then they
        belong to only one irreducible component.
    \end{itemize}
    What could appear to F.~Enriques as a wildly reducible moduli space
    was explained by K.~Kodaira \cite{Kodaira64} as an analytic
    codimension 1 subvariety of a smooth, irreducible, analytic  variety
    $\mathcal{M}$. More precisely:
    \begin{itemize}
        \item there exist analytic non-algebraic K3 surfaces,
        \item the Kuranishi space of any analytic K3 surface is
        smooth and of dimension 20.
    \end{itemize}
    The latter suffices to construct a smooth, irreducible, analytic
    universal family of K3 surfaces: its base $\mathcal{M}$ is the complex
    analytic moduli space of K3 surfaces and $\dim_{\C}\mathcal{M}=20$.
    Moreover $\mathcal{M}^{alg}$ turns out to be a dense subset
    of $\mathcal{M}$.

In other terms, Kodaira recovered an irreducible moduli space for $K3$'s by leaving the algebraic category and working in the bigger category of compact complex surfaces. By this observation, in the late 80's, M.Reid \cite{Reid87} proposed a conjectural construction of a \emph{sort of connected moduli space for \cy 3-folds} suggesting a construction (originally due to F.~Hirzebruch and later called \emph{conifold transition}) to parameterize birational classes of \cy 3-folds by means of moduli of complex structures on suitable non-\ka complex 3-folds given by the connected sum of copies of solid hypertori $S^3\times S^3$: this is the famous \emph{Reid's fantasy}.

\section{Aspects of deformation theory of \cy varieties}

Let $\mathcal{X}\stackrel{f} {\longrightarrow}B$ be a \emph{flat} and \emph{proper},
surjective map of complex spaces such that $B$ is connected and
there exists a special point $o\in B$ whose fibre $X=f^{-1}(o)$ is a, possibly singular, compact complex space.
Then $\mathcal{X}$ is called \emph{a deformation
family of $X$}. If the fibre $X_b=f^{-1}(b)$ is smooth, for some
$b\in B$, then $X_b$ is called \emph{a smoothing of $X$}. Moreover $X_b$ is also called a \emph{deformation} of $X_o$.

\noindent If the morphism $f$ is \emph{smooth} then $\mathcal{X}\stackrel{f} {\longrightarrow}B$ is called a \emph{smooth deformation family}.

\noindent In the following $\T^i_X$ will denote the \emph{ global deformation object of Lichtenbaum--Schlessinger} \cite{L-S}. Since we will always deal with at least \emph{normal} complex algebraic varieties, we can think of $\T^i_X=\Ext ^i \left(\Omega_X,\mathcal{O}_X\right)$, where $\Omega_{X}$ is the sheaf of holomorphic differential forms on $X$. Consider the
\emph{Lichtenbaum--Schlessinger cotangent sheaves} of $X$, $\Theta^i_{X} = \ext
^i\left(\Omega_{X},\mathcal{O}_{X}\right)$. Then $\Theta^0_X = \hm
\left(\Omega_X,\mathcal{O}_X\right)=: \Theta_X$ is the ``tangent"
sheaf of $X$ and $\Theta^i_X$ is supported over $\Sing(X)$, for
any $i>0$. By the \emph{local to global spectral
    sequence} relating the global $\Ext$ and sheaf $\ext$
    (see \cite{Grothendieck57} and \cite{Godement} II, 7.3.3) we get
    \begin{equation*}
    \xymatrix{E^{p,q}_2=H^p\left(X,\Theta_X^q\right)
               \ar@{=>}[r] & \mathbb{T}_X^{p+q}}
    \end{equation*}
giving that
\begin{eqnarray}
    &&\T^0_X \cong H^0(X,\Theta_X)\ , \label{T0} \\
    \label{T-lisci}&&\text{if $X$ is smooth then}\quad \T^i_X\cong H^i(X,\Theta_X)\ ,  \label{T1'}
\end{eqnarray}

Given a deformation family
$\mathcal{X}\stackrel{f}{\longrightarrow}B$ of $X$ \emph{for each
point $b\in B$ there is a well defined linear (and functorial) map
}
\[
    D_b f: \xymatrix{T_b B\ar[r]& \T^1_{X_b}}\quad
    \text{(Generalized Kodaira--Spencer map)}
\]
(see e.g. \cite{Palamodov} Theorem 5.1). Recall that
$\mathcal{X}\stackrel{f}{\longrightarrow}B$ is called
\begin{itemize}
    \item a \emph{versal} (some authors say \emph{complete}) deformation family of $X$ if for any
    deformation family $(\mathcal{Y},X)\stackrel{g}{\longrightarrow}
    (C,0)$ of $X$ there exists a map of pointed complex spaces $h:(U,0)\rightarrow
    (B,o)$, defined on a neighborhood $0\in U\subset C$,
    such that $\mathcal{Y}|_U$ is the \emph{pull--back} of
    $\mathcal{X}$ by $h$ i.e.
    \[
        \xymatrix{&\mathcal{Y}|_U=U\times_B \mathcal{X}\ar[r]\ar[d]^-g&\mathcal{X}\ar[d]^-f\\
                   C& U\ar@{_{(}->}[l]\ar[r]^-h&B}
    \]
    In particular the generalized Kodaira--Spencer map $\kappa(f)$ turns out to be surjective (\cite{Palamodov2}, \S~2.6);
    \item an \emph{effective versal} (or \emph{miniversal}) deformation family of $X$ if
    it is versal and the generalized Kodaira--Spencer map evaluated at $o\in
    B$, $D_of:\xymatrix{T_o B\ar[r]& \T^1_X}$ is injective, hence an isomorphism;
    \item a \emph{universal} family if it is versal and the map $h:U\to B$ is uniquely determined over the neighborhood
    $0\in U\subset C$. This suffices to imply that $f$ is an effective versal deformation of $X$ (\cite{Palamodov2}, \S~2.7.1).
\end{itemize}

\begin{theorem}[Douady--Grauert--Palamodov \cite{Douady74},
\cite{Grauert74}, \cite{Palamodov72} and \cite{Palamodov} Theorems
5.4-6]\label{DGP teorema} Every compact complex space $X$
has an effective versal deformation
$\mathcal{X}\stackrel{f}{\longrightarrow}B$ which is a proper map
and a versal deformation of each of its fibers. Moreover the germ
of analytic space $(B,o)$ is isomorphic to the germ of analytic
space $(q^{-1}(0), 0)$, where $q:\T^1_X\rightarrow \T^2_X$ is a
suitable holomorphic map (the \emph{obstruction map}) such that
$q(0)=0$. In particular if $\T^0_X=0$ then the previous versal effective deformation of $X$ is actually a universal one for all the
fibres close enough to $X$.
\end{theorem}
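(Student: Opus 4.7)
The plan is to split the argument into a formal (algebraic) step that produces the obstruction map $q$ together with the germ $(B,o)$, and an analytic step that promotes the resulting formal construction into a genuine proper holomorphic family $\mathcal{X}\to B$.

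For the formal step, I would consider the deformation functor $\Def_X$ on the category of Artinian local $\C$--algebras with residue field $\C$, sending $A$ to the set of isomorphism classes of flat families $\mathcal{X}_A\to \Spec A$ with closed fibre $X$. Using the identifications $\T^1_X\cong\Ext^1(\Omega_X,\co_X)$ and $\T^2_X\cong\Ext^2(\Omega_X,\co_X)$ and the Schlessinger--Rim machinery (available in this setting because both spaces are finite dimensional on a compact complex space), $\Def_X$ admits a formal hull with tangent space $\T^1_X$ and obstruction theory valued in $\T^2_X$: for each small extension $0\to I\to A'\to A\to 0$ the obstruction to lifting a deformation over $A$ to one over $A'$ sits in $\T^2_X\otimes_\C I$. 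Assembling these obstructions order by order yields a formal power series map $\widehat q:\widehat{\T^1_X}\to\widehat{\T^2_X}$ with $\widehat q(0)=0$, whose formal zero scheme is the formal base of the hull.

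The hard part, which I expect to be the main obstacle, is the analytic convergence and algebraization: one has to show that $\widehat q$ is the Taylor expansion of an honest holomorphic germ $q:\T^1_X\to\T^2_X$ and that the associated formal family comes from a proper holomorphic family over a genuine analytic representative of $(q^{-1}(0),0)$. In the smooth case this is Kuranishi's theorem, proved by solving a Maurer--Cartan equation via harmonic theory and a contraction argument in suitable Banach spaces. For possibly singular $X$ one replaces elliptic regularity with Grauert's finite-coherence and division theorems, which is the technical core of the Douady--Grauert--Palamodov machinery; I would cite this rather than reproduce it. The outcome is a proper holomorphic $f:\mathcal{X}\to B$ together with a biholomorphism $(B,o)\cong(q^{-1}(0),0)$ under which the generalized Kodaira--Spencer map $D_of$ is an isomorphism, so $f$ is an effective versal deformation at $X$.

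It remains to verify the three supplementary assertions. Properness of $f$ over a small enough neighbourhood of $o$ is immediate from compactness of $X=f^{-1}(o)$ and flatness of $f$. Versality at every nearby fibre $X_b$ is the openness-of-versality theorem: semicontinuity of $\dim_\C \T^1_{X_b}$ in the flat family combined with surjectivity of $D_o f$ at $o$ propagates surjectivity of $D_b f$ to a neighbourhood, and this is precisely what is needed to make $f$ a versal deformation of each of its fibres. Finally, under the hypothesis $\T^0_X=0\cong H^0(X,\Theta_X)$, the vanishing of infinitesimal automorphisms extends by semicontinuity to all fibres $X_b$ close to $X$; the ambiguity in the classifying map $h:(U,0)\to(B,o)$ in the definition of versality is controlled exactly by such infinitesimal automorphisms of the family $\mathcal{Y}|_U$, so $h$ is uniquely determined and $f$ is universal at all nearby fibres.
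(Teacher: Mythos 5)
This is a theorem imported from the literature: the paper gives no proof of it, only the citations to Douady, Grauert and Palamodov, so there is no in-paper argument to compare yours against. Judged on its own terms, your outline follows the standard architecture (Schlessinger-type formal theory with tangent space $\T^1_X$ and obstructions in $\T^2_X$, then an analytic realization, then openness of versality, then universality from $\T^0_X=0$), and the last step --- that vanishing of $\T^0$ propagates to nearby fibres by semicontinuity and kills the ambiguity in the classifying map --- is the right argument and is essentially what \cite{Palamodov2}, \S~2.7.1 records.

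There are, however, two places where what you write is a plan rather than a proof. First, the analytic core --- that the formal obstruction map converges to a holomorphic germ $q:\T^1_X\to\T^2_X$ and that the formal versal family is induced by an actual proper holomorphic family over a representative of $(q^{-1}(0),0)$ --- is precisely the content of the Douady--Grauert--Palamodov theorem, so ``I would cite this rather than reproduce it'' makes the argument circular if the goal is to prove the statement; note also that Douady's Banach-analytic-space method and Grauert's power-series method do not actually pass through a separate formal step followed by convergence, so your decomposition is a reorganization of the proof, not a summary of it. Second, openness of versality does not follow from ``semicontinuity of $\dim_{\C}\T^1_{X_b}$ plus surjectivity of $D_bf$'': surjectivity of the Kodaira--Spencer map at $b$ gives first-order completeness at $X_b$, but versality at $X_b$ (the existence of the classifying map $h$ for \emph{every} deformation of $X_b$, to all orders and analytically) is a separate nontrivial theorem --- the openness-of-versality results of Palamodov, Bingener and Flenner --- and should be invoked as such rather than derived from semicontinuity. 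The claim that properness is ``immediate from compactness and flatness'' is also too quick, though harmless. In short: correct skeleton, but the two load-bearing analytic inputs are asserted, not proved.
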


\begin{definition}[Kuranishi space]\label{Def-definizione}
The germ of analytic space
\[
    \Def(X):=(B,o)
\]
defined in the previous Theorem, is called the
\emph{Kuranishi space of $X$}.
\end{definition}

\begin{theorem}[Bogomolov-Tian-Todorov \cite{Bogomolov78},\cite{Tian87},\cite{Todorov89},\cite{Ran92}]\label{thm:BTT} Any \cy va\-rie\-ty $Y$ have unobstructed deformations, i.e., its Kuranishi space is smooth. In particular this means that $\Def(Y)\cong\T^1_Y$.
\end{theorem}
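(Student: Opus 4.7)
The plan is to set up formal deformation theory via the Kodaira--Spencer differential graded Lie algebra and then exploit the Calabi--Yau structure (plus the $\partial\bar\partial$-lemma on the underlying K\"ahler manifold) to kill every obstruction step by step. Since $Y$ is smooth, by \eqref{T1'} we have $\T^i_Y \cong H^i(Y,\Theta_Y)$, so Theorem \ref{DGP teorema} reduces the statement to showing that the formal obstruction map $q:H^1(Y,\Theta_Y)\to H^2(Y,\Theta_Y)$ vanishes identically (or, equivalently, that every first-order deformation can be lifted to all orders). Convergence of the resulting formal family then follows from standard Kuranishi-type estimates, so the analytic germ $\Def(Y)$ coincides with $\T^1_Y$.

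I would encode deformations of the complex structure by Beltrami differentials $\phi(t)=\sum_{k\geq 1}\phi_k t^k$ with $\phi_k\in A^{0,1}(Y,\Theta_Y)$, subject to the Maurer--Cartan equation
\begin{equation*}
\bar\partial\phi+\tfrac{1}{2}[\phi,\phi]=0,
\end{equation*}
where $[\,\cdot\,,\,\cdot\,]$ denotes the natural extension of the Schouten bracket on $\Theta_Y$ coupled with wedge product on $(0,\ast)$-forms. Given a harmonic representative $\phi_1\in \mathcal{H}^{0,1}(\Theta_Y)\cong H^1(Y,\Theta_Y)$, the problem is to construct $\phi_k$ recursively so that the equation holds modulo $t^{k+1}$. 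At the $k$-th step the obstruction is $\Psi_k:=\tfrac{1}{2}\sum_{i+j=k}[\phi_i,\phi_j]$, which one checks is $\bar\partial$-closed, so the obstruction to extension is the class $[\Psi_k]\in H^2(Y,\Theta_Y)$.

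The crucial input is the Tian--Todorov lemma. Contraction with the nowhere vanishing holomorphic volume form $\Omega\in H^0(Y,\mathcal{K}_Y)$ furnished by condition (1) of Definition \ref{cy-def} gives an $\mathcal{O}_Y$-linear isomorphism $\Theta_Y \xrightarrow{\sim} \Omega^{n-1}_Y$, $\xi\mapsto \iota_\xi\Omega$. Under this identification a direct local computation yields the identity
\begin{equation*}
\iota_{[\phi,\psi]}\Omega \;=\; -\partial(\iota_\phi\iota_\psi\Omega)+\iota_\phi\partial(\iota_\psi\Omega)+(-1)^{|\phi|}\iota_\psi\partial(\iota_\phi\Omega)
\end{equation*}
for $\Theta_Y$-valued $(0,\ast)$-forms. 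Applied to $\phi,\psi$ that are $\bar\partial$-closed and $\partial^\ast$-exact (i.e.\ in the image of the Hodge gauge), the last two terms can be arranged to be $\bar\partial$-exact, so $\iota_{\Psi_k}\Omega$ is simultaneously $\bar\partial$-closed and $\partial$-exact.

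At this point I would invoke the $\partial\bar\partial$-lemma on the compact K\"ahler manifold $Y$: any form that is simultaneously $d$-closed and either $\partial$- or $\bar\partial$-exact is in fact $\partial\bar\partial$-exact. This forces $[\Psi_k]=0$ in $H^2(Y,\Theta_Y)$, and moreover one can choose a primitive in a canonical gauge (the Hodge gauge $\partial^\ast\phi_k=0$) so that the recursion continues to satisfy the hypotheses required to apply the Tian--Todorov identity at the next step. Running the induction shows unobstructedness of the formal deformation functor, hence $q\equiv 0$ and $\Def(Y)\cong\T^1_Y\cong H^1(Y,\Theta_Y)$ as germs of analytic spaces.

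The main obstacle is bookkeeping: the Tian--Todorov identity vanishes cohomologically, but to continue the induction one needs the primitive produced by $\partial\bar\partial$-lemma to land again inside the class of forms to which the next application of the lemma is licit. This is exactly where the Hodge-theoretic gauge-fixing $\partial^\ast\phi_k=0$ must be propagated through the recursion, and where Ran's $T^1$-lifting argument (\cite{Ran92}) provides a clean alternative route that avoids explicit analytic estimates. Once formal unobstructedness is established in either guise, convergence follows from the Kuranishi theorem applied to the elliptic complex $(A^{0,\ast}(\Theta_Y),\bar\partial)$, completing the proof.
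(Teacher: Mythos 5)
The paper does not prove this statement: it is quoted as the classical Bogomolov--Tian--Todorov theorem with references to the original sources, so there is no internal proof to compare against. Your sketch is the standard Tian--Todorov argument (Maurer--Cartan recursion plus the Tian--Todorov bracket identity plus the $\partial\bar\partial$-lemma), and its architecture is sound; Ran's $T^1$-lifting is indeed the usual algebraic alternative. Two points of bookkeeping are stated imprecisely and should be tightened. First, for $k\geq 2$ the coefficients $\phi_k$ are \emph{not} $\bar\partial$-closed (they satisfy $\bar\partial\phi_k=-\Psi_k$), so the hypothesis under which you apply the identity cannot be ``$\phi,\psi$ $\bar\partial$-closed''; what one actually propagates through the induction is the condition $\partial(\iota_{\phi_i}\Omega)=0$, i.e.\ that each $\iota_{\phi_i}\Omega$ is $\partial$-closed (indeed $\partial$-exact for $i\geq 2$). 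Under that hypothesis the last two terms of the identity vanish identically rather than merely ``being arranged to be $\bar\partial$-exact'', giving $\iota_{\Psi_k}\Omega=\tfrac12\sum\partial(\iota_{\phi_i}\iota_{\phi_j}\Omega)$ on the nose. Second, the gauge you want is not ``$\partial^\ast$-exactness of $\phi_k$'' but the choice of primitive coming from the $\partial\bar\partial$-lemma: writing $\iota_{\Psi_k}\Omega=\partial\bar\partial\alpha$ and setting $\iota_{\phi_k}\Omega$ proportional to $\partial\alpha$ simultaneously solves $\bar\partial\phi_k=-\Psi_k$ and restores $\partial(\iota_{\phi_k}\Omega)=0$ for the next step. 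With those corrections the induction closes, and the passage from formal to convergent via Kuranishi theory is standard, so your proof is essentially the one in the cited literature.
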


Since for a \cy variety $Y$ we get
$$\T^0(Y)\cong H^0(Y,\Theta_Y)\cong H^0(Y,\Omega_Y^{n-1}) =0$$
then
\begin{corollary}\label{cor:universale} Every \cy variety $Y$ admits a universal effective family of \cy deformations of $Y$. In particular $h^{n-1,1}(Y)$ turns out to be the dimension of the complex moduli space of $Y$.
\end{corollary}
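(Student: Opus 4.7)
The plan is to deduce the corollary by combining Theorem \ref{DGP teorema} (Douady--Grauert--Palamodov) with Theorem \ref{thm:BTT} (Bogomolov--Tian--Todorov), using the trivialization of the canonical sheaf to identify the relevant cotangent objects with Dolbeault cohomology groups.

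First, I would establish that $\T^0_Y=0$, which is the hypothesis needed in Theorem \ref{DGP teorema} to upgrade an effective versal family to a universal one (for fibers near $Y$). The vanishing is already indicated in the excerpt just before the corollary: contraction with a fixed nowhere-vanishing holomorphic $n$-form, whose existence is granted by condition (1) of Definition \ref{cy-def}, yields an isomorphism of sheaves $\Theta_Y\cong \Omega_Y^{n-1}$ on the smooth variety $Y$. Combined with \eqref{T0} this gives
\[
    \T^0_Y\cong H^0(Y,\Theta_Y)\cong H^0(Y,\Omega_Y^{n-1})= H^{n-1,0}(Y),
\]
which is $0$ by condition (2) of Definition \ref{cy-def}. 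Applying Theorem \ref{DGP teorema} then delivers the universality part of the statement.

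Next I would compute the dimension. Since $Y$ is smooth, \eqref{T-lisci} gives $\T^1_Y\cong H^1(Y,\Theta_Y)$. By Theorem \ref{thm:BTT} the Kuranishi space $\Def(Y)$ is smooth and $\Def(Y)\cong \T^1_Y$ (as germs of analytic spaces at the origin), so
\[
    \dim_{\C}\Def(Y)=\dim_{\C}H^1(Y,\Theta_Y).
\]
Invoking once more the isomorphism $\Theta_Y\cong \Omega_Y^{n-1}$ produced by the trivial canonical bundle, I rewrite
\[
    H^1(Y,\Theta_Y)\cong H^1(Y,\Omega_Y^{n-1})=H^{n-1,1}(Y),
\]
whose complex dimension is $h^{n-1,1}(Y)$ by definition.

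I do not expect any genuine obstacle: both heavy inputs (DGP and BTT) have been quoted, the vanishing $H^0(Y,\Omega_Y^{n-1})=0$ is built into the definition of \cy variety, and the only ingredient specific to the \cy hypothesis is the canonical isomorphism $\Theta_Y\cong \Omega_Y^{n-1}$ produced by the nowhere-vanishing holomorphic $n$-form. The mildest subtlety is the interpretation of ``complex moduli space'' in the last sentence of the statement: it should be read as meaning the base $\Def(Y)$ of the universal family just constructed, so that the universality itself guarantees that this local dimension coincides with the local dimension of the moduli space of \cy structures on the underlying manifold of $Y$.
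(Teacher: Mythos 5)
Your proposal is correct and follows essentially the same route as the paper, which simply notes $\T^0_Y\cong H^0(Y,\Theta_Y)\cong H^0(Y,\Omega_Y^{n-1})=0$ immediately before the corollary and lets Theorem \ref{DGP teorema} together with Theorem \ref{thm:BTT} do the rest. Your write-up just makes explicit the same two inputs (DGP for universality via the vanishing of $\T^0_Y$, and Bogomolov--Tian--Todorov plus $\Theta_Y\cong\Omega_Y^{n-1}$ for the dimension count) that the paper leaves implicit.
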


\subsection{Deformation equivalence of \cy varieties}
In the late 80's R.~Friedman and J.W.~Morgan \cite{Friedman-Morgan} introduced the following equivalence relation between complex manifolds. Here we use notation introduced by F.~Catanese and M.~Manetti in several subsequent discussions of related problems and conjectures \cite{Catanese},\cite{Cat-Waj},\cite{Manetti},\cite{Catanese2008}.

\begin{definition}[Deformation equivalence, \cite{Friedman-Morgan} pg. 10]\label{def:def-eq}  Two complex manifolds $X_1$ and $X_2$ are \emph{direct deformation equivalent} (i.e. \emph{direct def-equivalent}) if there exists a smooth deformation family $\mathcal{X}\stackrel{f}{\longrightarrow} B$ whose base $B$ is an irreducible complex space admitting two points $b_1,b_2\in B$ such that $$X_i=f^{-1}(b_i)\ ,\ i=1,2\ .$$
The equivalence relation generated by direct def-equivalence is called \emph{def-equivalence} (or \emph{deformation type}): this means that two complex manifolds $X$ and $Y$ are def-equivalent (we will write $X\sim Y$) if and only if there exist a positive integer $n$ and smooth manifolds $X_1,\ldots,X_n$ such that
\begin{enumerate}
  \item $X\cong X_1$ and $X_n\cong Y$,
  \item for every $1\leq i\le n-1$, $X_i$ and  $X_{i+1}$ are direct def-equivalent.
\end{enumerate}
\end{definition}

\begin{remark}\label{rem:irriducibilità} Let us observe that:
\begin{enumerate}
  \item F.~Catanese observed that ``in order to analyse deformation equivalence, one may restrict oneself to the case where $\dim(B)= 1$: since two points in a complex space $B$ belong to the same irreducible component of $B$ if and only if they belong to an irreducible curve $B'\subset B$. One may further reduce to the case where $B$ is smooth simply by taking the normalization $B^0\to B_{\text{red}}\to B$ of the reduction $B_{\text{red}}$ of $B$, and taking the pull-back of the family to $B^0$\,'' \cite{Catanese2008}; more generally, up to a resolution of singularities of $B$ and a base change, one can always assume $B$ to be a \emph{smooth and irreducible} complex space; this is also observed by Friedman and Morgan immediately after the definition of def-equivalence: ``Equivalently, deformation type is the equivalence relation generated by de\-cla\-ring that two complex manifolds  are equivalent if they are both fibers in a proper smooth map between two connected complex \emph{manifolds}'' \cite[pg. 10]{Friedman-Morgan};
  \item if a concept of coarse moduli space for the manifolds $X,Y$ is defined, an equivalent formulation of Definition \ref{def:def-eq} is the following: \emph{two complex ma\-ni\-folds $X$ and $Y$ are def-equivalent if and only if they are elements of the same irreducible component of their moduli space.} This is the case e.g. of minimal compact complex surfaces \cite[Def.~23]{CBP}, \cite{Manetti}. For what concerns \cy varieties a coarse moduli space is well defined as a quasi--projective scheme \cite[\S~1.2]{Viehweg}, then such an equivalent definition can be applied.
\end{enumerate}
\end{remark}

\begin{remark}\label{rem:D_Y} Given a \cy manifold $Y$, let $\mathcal{D}_Y$ denote the \emph{def-equivalence class of $Y$}. For what observed in the previous Remark \ref{rem:irriducibilità} and recalling Corollary \ref{cor:universale}, $\mathcal{D}_Y$ can be thought of as $h^{n-1,1}(Y)$--dimensional irreducible complex space giving an irreducible component of the coarse moduli space of \cy manifolds. There are many ways of compactifying such an irreducible component: here we will not discuss this aspect, being beyond the scope of the present paper. Anyway in the following we will assume that a closure $\mathcal{M}$ of a def-equivalence class $\mathcal{D}$ of \cy manifolds will include any singular degeneration of elements in $\mathcal{D}$ carrying either terminal or canonical singularities.
\end{remark}

\begin{remark}\label{rem:intersezione}
The closures $\mathcal{M}_1$ and $\mathcal{M}_2$ of two def-equivalence classes $\mathcal{D}_1$ and $\mathcal{D}_2$, respectively, may admit a common limit point $b\in \mathcal{M}_1\cap\mathcal{M}_2$; M.~Gross exhibited an effective example of this fact \cite{Gross97a}. Recalling that a 3--dimensional terminal singularity is necessarily an isolated singularity, the extension, due to Y.~Namikawa, of the Bogomolov-Tian-Todorov Theorem \ref{thm:BTT} to the Kuranishi space of an isolated 3-dimensional terminal singularity (\cite{Namikawa94}, Theorem 1), allows us to conclude that,
\begin{itemize}
\item
\emph{for \cy 3--folds, given a common point $b\in \mathcal{M}_1\cap\mathcal{M}_2$ then $Y_b$ can't admit terminal singularities}, since $\Def(Y_b)$ is clearly reducible, hence singular.
\end{itemize}
In fact the Gross' example in \cite{Gross97a} exhibits the case of a \cy 3--fold admitting canonical singularities.
\end{remark}

\begin{remark}
By the classical theorem of Ehresmann:
\begin{itemize}
\item\emph{two def-equivalent complex manifolds are orientedly diffeomorphic}.
\end{itemize}
Freidman and Morgan conjectured that the converse could be true: this is the so called \emph{$\text{def}=\text{diff}$ problem}.  Friedman actually proved this equivalence for \emph{compact complex surfaces with $b_1=0$ and Kodaira dimension less than or equal to 1} \cite{Friedman97}: in particular \emph{it holds for $K3$ surfaces}. Notice that

\begin{itemize}
\item \emph{since the $\text{def}=\text{diff}$ problem admits a positive answer for elliptic curves (obvious) and $K3$ surfaces (Friedman \cite{Friedman97}) it makes sense to ask if does it hold for any \cy varieties.}
\end{itemize}

This problem has been negatively settled in dimension $n\geq3$ by Y.~Ruan \cite{Ruan} who showed that the two \cy varieties constructed by M.~Gross in \cite{Gross97a}, and belonging to different irreducible components of the moduli space, are actually diffeomorphic but not symplectomorphic, hence not def-equivalent\footnote{I thank the unknown referee who pointed me out such a Ruan's counterexample of which I was not aware}.

Subsequently, the general problem has been negatively settled by several counter\-examples, the first of which was given by M.~Manetti, then followed by many others by F.~Catanese, Kharlamov-Kulikov, Bauer-Catanese-Grunewald, Catanese-Wajnryb ... (see \cite{CBP} and therein references).
\end{remark}

\section{Geometric transitions}

\begin{definition}\label{def:gt}
Let $Y$ be a \cy $n$--fold and $\phi : Y\rightarrow \overline{Y}$ be
a \emph{birational contraction} onto a \emph{normal} variety. If
there exists a complex deformation (\emph{smoothing}) of
$\overline{Y}$ to a \cy $n$--fold $\widetilde{Y}$, then the process
of going from $Y$ to $\widetilde{Y}$ is called a \emph{geometric
transition} (for short \emph{g.t.}) and denoted by
$T(Y,\overline{Y},\widetilde{Y})$ or by the diagram
\begin{equation*}
    \xymatrix@1{Y\ar@/_1pc/ @{.>}[rr]_T\ar[r]^{\phi}&
                \overline{Y}\ar@{<~>}[r]&\widetilde{Y}}\ .
\end{equation*}
A g.t. $T(Y,\overline{Y},\widetilde{Y})$ is called
\emph{trivial} if $\widetilde{Y}$ is a deformation of $Y$.

\noindent A g.t. $T(Y,\overline{Y},\widetilde{Y})$ is called a
\emph{conifold transition} (for short \emph{c.t.}) if $\overline{Y}$ admits only \emph{ordinary
double points} (nodes) as singularities.
\end{definition}

\begin{remarks}{\rm
\begin{enumerate}
    \item \emph{Trivial geometric transitions may occur}: in fact it is not possible to
    realize non--trivial transitions in dimension less than or equal to 1. For a trivial g.t. in dimension 3 one may e.g. consider Example
4.6 in \cite{Wilson92} where $\phi$ admits an elliptic scroll as
exceptional divisor and contracts it down to an elliptic curve
$C$.
    \item The transition process was firstly (locally) observed by
    H.~Clemens in the study of double solids $V$ admitting at worst
    nodal singularities \cite{Clemens83}: in his Lemma 1.11 he
    pointed out ``the relation of the resolution of the
    singularities of $V$ to the standard $S^3 \times D_3$ to $S^2 \times
    D_4$ surgery".
    \item Let $T(Y,\overline{Y},\widetilde{Y})$ be a geometric transition of \cy 3--folds. Then $\overline{Y}$ can be supposed to carry canonical singularities, at worst (see \cite{Wilson92} and references therein): it is then a limit point of the def-equivalence class $\mathcal{D}_{\widetilde{Y}}$ of $\widetilde{Y}$.
\end{enumerate}}
\end{remarks}

\subsection{The basic example: the conifold transition in $\P ^4$}\label{l'esempio}

The following example, given in \cite{GMS95}, shows that
\emph{non--trivial (conifold) transitions occur when $\dim Y \geq
3$}.

Let $\overline{Y}\subset\P ^4$ be the singular hypersurface given
by the following equation
\begin{equation}\label{equazione}
    x_3 g(x_0,\ldots ,x_4) + x_4 h(x_0,\ldots ,x_4) = 0
\end{equation}
where $g$ and $h$ are generic homogeneous polynomials of degree 4.
$\overline{Y}$ is then the \emph{generic quintic 3--fold
containing the plane $\pi : x_3 = x_4 = 0$}. Then the singular
locus of $\overline{Y}$ is given by
\begin{equation}\label{singolarita}
    \Sing (\overline{Y}) = \{ [x]\in \P ^4 | x_3=x_4=g(x)=h(x)=0\} \ .
\end{equation}
One can then easily prove that:
\begin{itemize}
  \item $\Sing (\overline{Y})$ is composed by 16 nodes,
  \item \emph{(the resolution $Y$)}: $\Sing (\overline{Y})$ can be simultaneously resolved and the
resolution $\phi : Y\rightarrow \overline{Y}$ is a \emph{small
blow up} such that $Y$ is a smooth \cy 3--fold,
  \item \emph{(the smoothing $\widetilde{Y}$)}: $\overline{Y}$ admits the obvious smoothing given by the generic
quintic 3--fold $\widetilde{Y}\subset \P ^4$. In particular
$\widetilde{Y}$ cannot be a deformation of $Y$ i.e. the conifold
transition $T(Y,\overline{Y},\widetilde{Y})$ is not trivial.
\end{itemize}
The latter fact can be easily shown by applying the Lefschetz Hyperplane Theorem and the K\"{u}nneth
Formula to get the following relations on the second Betti numbers:
\begin{eqnarray}\label{betti_nmb}
\nonumber
  b_2 (\widetilde{Y})&=& b_2 (\P ^4) = 1 \\
  b_2 (Y) &=& b_2 (\P ^4 \times \P ^1) = 2
\end{eqnarray}
Therefore $\widetilde{Y}$ and $Y$ cannot be smooth fibers of the
same analytic family.

\subsection{Local topology of a conifold transition}
From now on we will restrict to consider the case $n=3$ of \cy 3-folds.
Then we can observe the following facts (for full details the interested reader is referred to \cite{GR02}, \S 1.1).
\begin{itemize}
  \item[1.] Locally a 3-dimensional node can be described by the local equation
  $$ \overline{U}:=\{z_1z_3 + z_2z_4 = 0\}\subset\C^4\ .$$
  Topologically $\overline{U}$ turns out to be a cone over $S^3 \times S^2$.
  \item[2.] A local resolution of $\overline{U}$ is described by
  $$ \widehat{U}:=\left\{\begin{array}{cc}
                           y_0 z_4 - y_1 z_3 & =\ 0 \\
                           y_0 z_1 + y_1 z_2 & =\ 0
                         \end{array}
  \right\}\subset \C ^4 \times \P ^1\ .$$
  Then there exists a diffeomorphism $\widehat{U}\stackrel{\Phi}{\cong} \R ^4\times S^2$. Moreover $\widehat{U}$ can be identified with the total space of the rank 2
holomorphic vector bundle
$\mathcal{O}_{\P^1}(-1)\oplus\mathcal{O}_{\P^1}(-1)$ over the
exceptional fibre $\P^1_{\C}=\varphi^{-1}(0)$. In particular
$\widehat{U}$ admits a natural complex structure.
  \item[3.] A local smoothing $\widetilde{U}$ of the node $\overline{U}$ can be given by the
1--parameter family $f:\mathcal{U}\rightarrow\R$ where
$$ U_t:=f^{-1}(t) = \{z_1z_3 + z_2z_4 = t\}\subset\C^4\ .$$
Setting $\widetilde{U}:=U_{t_0}$ for some $t_0 \in \R, t_0>0$, then $\widetilde{U}\stackrel{\Psi}{\cong}
S^3\times\R^3$ since it is diffeomorphic to the cotangent bundle $T^* S^3$
of the 3--sphere giving the vanishing cycle of the smoothing. In particular $\widetilde{U}$ admits a natural symplectic structure for which the
vanishing sphere turns out to be a lagrangian submanifold.
\end{itemize}

\begin{theorem}[Clemens \cite{Clemens83} Lemma 1.11, \cite{GR02} Thm. 1.6]\label{clemens lemma}
Let $D_n\subset \R^n$ be the closed unit ball and consider
\begin{itemize}
    \item $S^3\times D_3\subset S^3\times \R^3 \overset{\Psi^{-1}}{\cong} \widetilde{U}$
    \item $D_4\times S^2\subset \R^4\times S^2 \overset{\Phi^{-1}}{\cong} \widehat{U}$
\end{itemize}
Then $\widetilde{D}:= \Psi^{-1}(S^3\times D_3)$ and $\widehat{D}:=
\Phi^{-1}(D_4\times S^2)$ are compact tubular neighborhoods of the
vanishing cycle $\widetilde{S}\subset\widetilde{U}$ and of the
exceptional cycle $\P^1_{\C}\subset\widehat{U}$, respectively.

\noindent Consider the standard diffeomorphism
\begin{equation*}
    \begin{array}{cccc}
      \alpha' : & (\R^4 \setminus \{0\})\times S^2 & \overset{\cong}{\longrightarrow} & S^3 \times (\R^3 \setminus \{0\}) \\
       & (u,v) & \longmapsto &  ( \frac{u}{|u|}, |u|v ) \\
    \end{array}
\end{equation*}
and restrict it to $D_4 \times S^2$. Since
\begin{equation*}
    \partial(D_4 \times S^2)= S^3 \times S^2 = \partial(S^3 \times D_3)
\end{equation*}
observe that $\alpha'|_{\partial(D_4 \times S^2)}= \id |_{S^3
\times S^2}$. Hence $\alpha'$ induces a standard surgery from
$\R^4\times S^2$ to $S^3\times \R^3$.

\noindent Then $\widetilde{U}$ can be obtained from $\widehat{U}$
by removing $\widehat{D}$ and pasting in $\widetilde{D}$, by means
of the diffeomorphism $\alpha:= \Psi^{-1}\circ\alpha'\circ\Phi$.
\end{theorem}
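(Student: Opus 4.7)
The plan is to establish in turn the three substantive claims of the statement: that $\widehat D$ and $\widetilde D$ are compact tubular neighborhoods, that the gluing map $\alpha'$ is the identity on the common boundary, and that the resulting surgery actually recovers $\widetilde U$.

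First I would unwind the two explicit diffeomorphisms. The map $\Phi$ identifies $\widehat U$ with the total space of $\co_{\pl}(-1)\oplus\co_{\pl}(-1)\to\pl$, smoothly trivialised as $\R^4\times S^2$, and sends the exceptional $\pl_{\C}=\varphi^{-1}(0)$ to the zero section $\{0\}\times S^2$; hence $\widehat D=\Phi^{-1}(D_4\times S^2)$ is the image of the closed unit disc bundle and is a compact tubular neighborhood of $\pl_{\C}$. In the same way, $\Psi$ identifies $\widetilde U$ with the cotangent bundle $T^*S^3\cong S^3\times\R^3$ and sends the vanishing sphere $\widetilde S$ to the zero section $S^3\times\{0\}$, so $\widetilde D=\Psi^{-1}(S^3\times D_3)$ is the preimage of the closed unit co-disc bundle, again a compact tubular neighborhood.

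The boundary check is immediate: on $\partial(D_4\times S^2)=S^3\times S^2$ we have $|u|=1$, so
\[
\alpha'(u,v)=\left(\tfrac{u}{|u|},\,|u|v\right)=(u,v)=\id_{S^3\times S^2}(u,v),
\]
which also coincides with $\id|_{\partial(S^3\times D_3)}$. Consequently $\alpha=\Psi^{-1}\circ\alpha'\circ\Phi$ is compatible, along the matching boundary, with the trivial identification of $\partial\widehat D$ with $\partial\widetilde D$, so pasting $\widetilde D$ onto $\widehat U\setminus\widehat D^{\circ}$ via $\alpha$ yields a well-defined smooth manifold.

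The central step is to identify this surgered manifold with $\widetilde U$ itself. The key observation is that both $\widehat U\setminus\pl_{\C}$ and $\widetilde U\setminus\widetilde S$ are canonically diffeomorphic to the punctured cone $\overline U\setminus\{0\}$: the former via the blow-down $\varphi$ (a biholomorphism off the exceptional locus), the latter via the degeneration map of the smoothing family $f:\mathcal U\to\R$, under which the vanishing sphere collapses onto the node. Writing the defining equations in a chart $y_0\neq 0$, one checks by direct computation that the composite identification $\widehat U\setminus\pl_{\C}\xrightarrow{\sim}\overline U\setminus\{0\}\xrightarrow{\sim}\widetilde U\setminus\widetilde S$ has, when read through $\Phi$ and $\Psi$, precisely the radial form $(u,v)\mapsto(u/|u|,|u|v)$. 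Gluing this identification to the trivial identification on the tubular neighborhood parts then produces the desired diffeomorphism $\widetilde U\cong(\widehat U\setminus\widehat D^{\circ})\cup_{\alpha}\widetilde D$. The main obstacle is exactly this last computation: verifying that the naive radial formula $\alpha'$ is the correct \emph{shape} of the two cone identifications, rather than a non-trivially twisted variant. This is bookkeeping-heavy but direct, and comes down to the explicit form of the trivialisations $\Phi$ and $\Psi$, which are canonical once the nodal equation $z_1z_3+z_2z_4=0$ and its deformations have been fixed.
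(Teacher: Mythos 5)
The paper itself offers no proof of this statement: it is quoted verbatim from Clemens (\cite{Clemens83}, Lemma 1.11) and from \cite{GR02}, Theorem 1.6, so your attempt can only be measured against the standard argument. Your first two steps (identifying $\widehat{D}$ and $\widetilde{D}$ with the closed disc bundles under the trivialisations $\Phi$ and $\Psi$, and the computation $\alpha'(u,v)=(u,v)$ when $|u|=1$) are correct and are exactly what is needed.

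Your ``central step'', however, is both more complicated than necessary and not actually carried out. Once $\Phi$ and $\Psi$ are fixed, the final claim is essentially formal: the explicit formula shows that $\alpha'$ restricts to a diffeomorphism of $(\R^4\setminus D_4^{\circ})\times S^2$ onto $S^3\times(\R^3\setminus D_3^{\circ})$, hence $\alpha=\Psi^{-1}\circ\alpha'\circ\Phi$ maps $\widehat{U}\setminus\widehat{D}^{\circ}$ diffeomorphically onto $\widetilde{U}\setminus\widetilde{D}^{\circ}$ while agreeing with the identity on the common boundary $S^3\times S^2$; the map which is $\alpha$ on $\widehat{U}\setminus\widehat{D}^{\circ}$ and the inclusion on $\widetilde{D}$ therefore descends to a diffeomorphism $(\widehat{U}\setminus\widehat{D}^{\circ})\cup_{\alpha}\widetilde{D}\cong\widetilde{U}$ (modulo the routine collar argument for smoothness across the gluing locus). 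Nothing about the punctured cone $\overline{U}\setminus\{0\}$ is required. Your detour through $\overline{U}\setminus\{0\}$ also contains the one genuine gap of the proposal: the identification $\widetilde{U}\setminus\widetilde{S}\cong\overline{U}\setminus\{0\}$ ``via the degeneration map'' is not canonical --- it requires trivialising the family $f:\mathcal{U}\to\R$ away from the node, which involves choices --- and the assertion that the resulting composite, read through $\Phi$ and $\Psi$, is \emph{precisely} the radial map $(u,v)\mapsto(u/|u|,|u|v)$ is stated but never verified; at best it holds up to isotopy. Since that claim is the load-bearing part of your third step, you should either do the computation or, better, replace the whole step by the direct formal argument above.
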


Let us underline a global consequence of the Theorem \ref{clemens lemma}, as a straightforward application of the Seifert-van Kampen Theorem
\begin{corollary}\label{cor:S-vK} A conifold transition does not change the fundamental group.
\end{corollary}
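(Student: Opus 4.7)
The plan is to apply the Seifert--van Kampen theorem to the global decomposition of $Y$ and $\widetilde{Y}$ implicit in Theorem \ref{clemens lemma}. Let $p_1,\dots,p_N$ be the nodes of $\overline{Y}$, and let $\widehat{D}_i\subset Y$, $\widetilde{D}_i\subset \widetilde{Y}$ be the compact tubular neighborhoods of the exceptional $\P^1$'s and of the vanishing $S^3$'s supplied by that theorem. Since the surgery diffeomorphism $\alpha$ of Theorem \ref{clemens lemma} restricts to the identity on the common boundary $S^3\times S^2$, the complements $Y\setminus\bigsqcup_i \widehat{D}_i^{\circ}$ and $\widetilde{Y}\setminus\bigsqcup_i \widetilde{D}_i^{\circ}$ are canonically identified as one common manifold--with--boundary $U$. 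Thus both $Y$ and $\widetilde{Y}$ are obtained by gluing different ``caps'' onto $U$ along the same $N$ copies of $S^3\times S^2$.

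First I would replace the closed pieces $U$, $\widehat{D}_i$, $\widetilde{D}_i$ by honest open sets, by thickening along collar neighborhoods so that each pairwise intersection deformation retracts onto the corresponding boundary component $S^3\times S^2$. This ensures the resulting cover satisfies the path--connectedness hypotheses of Seifert--van Kampen (with all basepoints chosen in $U$ and connected by paths lying in $U$).

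The key observation is that all three model pieces are simply connected:
\begin{equation*}
    \pi_1(\widehat{D}_i)=\pi_1(D^4\times S^2)=1,\qquad \pi_1(\widetilde{D}_i)=\pi_1(S^3\times D^3)=1,\qquad \pi_1(S^3\times S^2)=1.
\end{equation*}
Iterating Seifert--van Kampen over the $N$ nodes therefore collapses every amalgamated factor, giving
\begin{equation*}
    \pi_1(Y)\;\cong\;\pi_1(U)\;\cong\;\pi_1(\widetilde{Y}).
\end{equation*}

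The main ``obstacle'' is purely bookkeeping: one must set up the $N$--fold iteration of van Kampen with open, path--connected pieces whose pairwise intersections are also path--connected, and carry along a consistent choice of basepoint in $U$. No new geometric input is required beyond Clemens' surgery description and the simple--connectedness of the three model spaces $D^4\times S^2$, $S^3\times D^3$ and $S^3\times S^2$.
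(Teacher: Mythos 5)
Your argument is exactly the one the paper intends: the paper offers no details beyond calling the corollary ``a straightforward application of the Seifert--van Kampen Theorem'' to the surgery description of Theorem \ref{clemens lemma}, and your decomposition of $Y$ and $\widetilde{Y}$ into a common complement $U$ plus the simply connected caps $D^4\times S^2$ and $S^3\times D^3$ glued along the simply connected $S^3\times S^2$ is precisely that application. The proof is correct as written.
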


\section{Reid's fantasy}
Since geometric transitions (and in particular \emph{conifold} transitions) may connect topologically distinct \cy 3-folds, M.~Reid thought that they could be the right instrument to recover a sort of \emph{connectedness} of the \cy 3-folds complex moduli space \cite{Reid87}. Quickly, his construction was the following.
\begin{itemize}
  \item[1.] \emph{Assumption}: every projective \cy 3--fold $Y$ is birational to a \cy 3--fold
    $Y'$ such that $H^2(Y')$ is generated by rational
    curves.
  \item[2.] Consequently if $\phi:Y'\rightarrow\overline{Y}$ is the
    morphism contacting all the homologically independent rational curves, then $\overline{Y}$ is always smoothable, by a Friedman result (\cite{Friedman86}, Corollary 4.7), and every smoothing $\widetilde{Y}$ has $b_2(\widetilde{Y})=0$, meaning that $\overline{Y}$ can be smoothed only to \emph{non--\ka} compact
complex 3--folds.
  \item[3.] By results of C.~T.~C.~Wall \cite{Wall66}, any such smoothing $\widetilde{Y}$ has
topological type completely determined by its third Betti number $b_3(\widetilde{Y})$, implying that it is diffeomorphic to a connected sum $\left(
S^3\times S^3\right) ^{\# r}$ of $r$ copies of the \emph{solid
hypertorus} $S^3\times S^3$.
\end{itemize}
Then we get the famous:

\begin{conjecture}[Reid's fantasy]\label{fantasia di Reid}
Up to some kind of inductive limit over $r$, the birational
classes of projective \cy 3--folds can be fitted together, by
means of geometric transitions, into one irreducible family
parameterized by the moduli space $\mathcal{N}$ of complex
structures over suitable connected sum of copies of solid
hypertori.
\end{conjecture}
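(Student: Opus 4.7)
The plan is to realize the three-step outline stated just before the conjecture and then assemble the results into a single connected limit object. Starting from an arbitrary projective \cy $3$-fold $Y$, I would first confront the birational assumption: run a Mori program on $Y$ to replace it by a birational \cy partner $Y'$ whose $H^2$ is generated by classes of rational curves. Because $K_{Y'}=0$, every extremal ray of the Mori cone of $Y'$ is $K$-trivial, so extremal contractions preserve the \cy condition (up to flopping small contractions); iterating such modifications is the natural mechanism for producing the required $Y'$. Proving that this process always terminates at a model with $H^2(Y')$ spanned by rational curves is, in my view, the main obstacle: no result in the preceding sections provides it, and it is really a statement about the structure of the Mori cone of projective \cy $3$-folds inside each birational class.

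Granted $Y'$, step~2 is essentially automatic: contract all homologically independent rational curves by $\phi : Y'\to\overline{Y}$ and invoke Friedman's criterion (\cite{Friedman86}, Cor.~4.7) to smooth $\overline{Y}$ to a compact complex $3$-fold $\widetilde{Y}$. Since $\phi$ kills the whole of $H^2(Y')$, every such smoothing satisfies $b_2(\widetilde{Y})=0$ and is therefore non--\ka. Step~3 then applies Wall's classification \cite{Wall66}: iterating Corollary~\ref{cor:S-vK} along the chain of (conifold-type) transitions preserves simple connectedness, so with $b_2=0$ Wall's theorem identifies $\widetilde{Y}$ diffeomorphically with $(S^3\times S^3)^{\# r}$, where $2r=b_3(\widetilde{Y})$.

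The final step is the assembly into $\mathcal{N}$. For each fixed $r$, the moduli of complex structures on $(S^3\times S^3)^{\# r}$ should collect, by the procedure above, one point for every birational class of projective \cy $3$-folds whose terminal smoothing is of rank $r$; geometric transitions preserving $r$ lift to specialization/deformation relations inside this moduli, while transitions that change $r$ are precisely what must glue the moduli of different rank. The conjectural ``inductive limit over $r$'' is intended to perform this gluing globally, producing $\mathcal{N}$. Beyond the birational input of step~1, the hardest point is exactly this gluing: one must prove that the result is irreducible rather than a disjoint union of components indexed by genuinely distinct birational--geometric types, and one must give rigorous analytic meaning to the inductive limit itself. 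Neither of these is provided by the theory reviewed so far, which is precisely why the statement remains a conjecture.
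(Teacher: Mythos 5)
Your proposal reproduces, essentially verbatim, the three-step heuristic construction that the paper itself presents immediately before the statement (the birational assumption, Friedman's smoothing criterion, and Wall's classification), and you correctly identify the same open points --- the unproven birational assumption of step~1 and the gluing/irreducibility of $\mathcal{N}$ --- that keep the statement a conjecture rather than a theorem. The paper offers no proof (this is Reid's fantasy, stated as Conjecture~\ref{fantasia di Reid}), so your honest assessment that the argument cannot be completed from the material reviewed is exactly right and matches the paper's own stance.
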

This conjecture has the further fascinating property of recovering the idea that moduli could be described by studying complex structures over a (hyper)-torus, typical of elliptic curves.

\noindent Unfortunately, the description of the moduli space $\mathcal{N}$ turns out to be a quite hard problem.

\section{The string theoretic \cy web}\label{s:string-web}

\cy 3--folds play a fundamental role in
 10--dimensional string theories: locally, four dimensions give rise
 to the usual Minkowski space--time $M_4$ while the remaining six dimensions (the so called
 \emph{hidden dimensions} for their microscopic extension) are compactified to a geometric model $Y$
 which, essentially to preserve the required supersymmetry, turns out to
 be a \cy 3--fold. Therefore the string theoretic space-time looks like
 \begin{itemize}
                     \item \emph{a locally trivial 10-dimensional bundle whose base is the usual space-time of Einstein and which is locally isomorphic to $M_4\times Y$.  }
                   \end{itemize}
\subsection{The \emph{vacuum degeneracy problem}} In spite of the fact that there are only five
 consistent 10--dimensional super--string theories, actually
 nearly unique via dualities,
 $$
 \xymatrix{\text{Type II--A}\ar@{<.>}[dd]_{\text{$T$--duality}}\ar@/^1pc/ @{<.>}[dd]^{M.S.}
                                                                & & E_8\times E_8\ \text{heterotic}
                                                                \ar@{<.>}[dd]^{\text{$T$--duality}} \\
             & \boxed{\text{$\mathcal{M}$--theory}}\ar[ul]\ar[ur]\ar[dl]\ar[dd]\ar[dr]&\\
           \text{Type II--B} & & \so (32)\ \text{heterotic}\ar@{<-->}[dl]^-{\quad\text{$S$--duality}}\\
           & \text{Type I} &}
$$
 the compactification process give
 rise to the problem of choosing the appropriate \cy model: which fibers in the space-time bundle?

 In fact on the physical side, there
 is not any prescription for making a precise choice of the \emph{vacuum model} and on the mathematical side
 there is a multitude of topologically distinct
 \cy 3--folds. By the way, the choice of two distinct models does not at all give
rise to equivalent physical theories, since the physics turns out to be strictly related to the cohomology of the \cy model.

Ideas connected with the formulation of Reid's fantasy,
 suggested to physicists, such as P.~Candelas, P.~S.~Green, T.~H\"{u}bsch
 and others that:
 \begin{itemize}
    \item \emph{(simply connected) \cy 3--folds could be, at least mathematically,
    connected with each other by means of geometric (conifold) transitions}.
 \end{itemize}
This is the so called \emph{\cy web conjecture} described in many
insightful papers starting from 1988. The word "mathematically" in the statement above is a prelude to the further problem of understanding how physics passes through the singularities of a geometric transition process. Actually, as far as I know, conifold transitions are almost the only geometric transitions which have been understood from the physical point of view, after the work of A.~Strominger \cite{Strominger95}: the word ``almost'' refers to the so-called \emph{hyperconifolds transitions}, which are divisorial geometric transitions, discovered by R.~Davies, having the property of being \emph{mirror reverse transitions} of conifold ones (see \cite{Davies10} and \cite{Davies11}).   Let us here underline that the hypothesis of simply connectedness is then necessary if one would only use conifold transitions, as a consequence of Corollary \ref{cor:S-vK}.

\noindent In this sense, conifold transitions turn out to be very interesting both mathematically and physically: probably because they give a concrete bridge between the \emph{complex structures} and the \emph{symplectic structures} on a \cy 3-fold.

\section{The Gross \cy web: nodes and arrows}\label{s:cy-web}
A mathematically refined version of the \cy web conjecture was
presented by M.~Gross in \cite{Gross97b}: it is a sort of synthesis between Reid's fantasy and the \cy web.

The construction.
\begin{itemize}
  \item[1.] On the contrary of the K3 case for which an algebraic K3
surface can be smoothly deformed to a non--algebraic one, the
deformation of a projective \cy 3--fold, even singular, is still
projective.
  \item[2.] Since the hardest part of Reid's fantasy seems to be in dealing with non--\ka 3--folds, one could skip this part by insisting on staying within the projective category.
  \item[3.] Think the \emph{nodes} of the giant web predicted by
the web conjecture as consisting of suitable closures of def-equivalence classes of
\cy 3--folds, as described in Remarks \ref{rem:irriducibilità} and \ref{rem:D_Y} below.
  \item[4.] Two such nodes, say $\mathcal{M}_1$ and
$\mathcal{M}_2$, are connected by an \emph{arrow}
$\mathcal{M}_1\rightarrow\mathcal{M}_2$ if \emph{there exist \cy 3-folds $Y\in\mathcal{M}_1$ and $\widetilde{Y}\in\mathcal{M}_2$ which are each other connected by means of a geometric transition}. More precisely there exists:
\begin{itemize}
  \item a birational contraction to a normal 3--fold
$\phi:Y\rightarrow\overline{Y}$
  \item a deformation family
$(\overline{\mathcal{Y}},\overline{Y})\rightarrow(\Delta,0)$ such that
$\overline{\mathcal{Y}}_t\cong\widetilde{Y}$ for some $t\in\Delta\ ,\ t\neq 0$.
\end{itemize}
\end{itemize}

\begin{example}[See also \cite{Gross97b}]\label{esempio-web}{\rm Let
\begin{itemize}
    \item $\mathcal{M}_Q$ be the family of quintic 3--folds in $\P
    ^4$,
    \item $\mathcal{M}_D$ be the family of double solids (i.e. double
    covers of $\P ^3$) branching along a octic surface of $\P ^3$,
    \item $\mathcal{M}_T$ be (a closure of) the def-class of a smooth blow--up of a
    quintic 3--fold having a triple point.
\end{itemize}
\emph{Then these deformation families are nodes of the
following connected graph}
\begin{equation}\label{3web}
    \xymatrix{&\mathcal{M}_T\ar[dr]\ar[dl]&\\
              \mathcal{M}_Q&&\mathcal{M}_D}
\end{equation}
where the two arrows are obtained as follows:
\begin{itemize}
  \item let $Z$ be a smooth element in $\mathcal{M}_T$ and
$\phi:Z\rightarrow\overline{Y}$ be the contraction of the
exceptional divisor of $Z$. Then $\overline{Y}$ is a quintic
3--fold in $\P ^4$ with a triple point. Since $\overline{Y}$ can
be smoothed to a quintic 3--fold we have
$\mathcal{M}_T\longrightarrow\mathcal{M}_Q$
  \item if we project $\overline{Y}$ from its
triple point then we get a rational morphism $\psi: \overline{Y}\dashrightarrow\P ^3$ which can be lifted to the blow up
$Z$ of $\overline{Y}$, giving rise to a generically finite morphism $\widehat{\psi}:
Z\rightarrow \P ^3$. Consider its Stein factorization
$\widehat{\psi}=f\circ\varphi$. Then we get the following
commutative diagram
\begin{equation}\label{diagrammaStein}
    \xymatrix{Z\ar[r]^{\varphi}\ar[dr]^{\widehat{\psi}}\ar[d]^{\phi}& \overline{X}\ar[d]^{f}\\
              \overline{Y}\ar@{-->}[r]^{\psi}& \P ^3}
\end{equation}
where $f$ gives to $\overline{X}$ the structure of a double
solid branched along a singular octic surface $S\subset\P ^3$.
Since $\overline{X}$ can immediately be smoothed by
smoothing the branching locus $S\subset\P ^3$ this gives the arrow $\mathcal{M}_T\longrightarrow\mathcal{M}_D$\ .
\end{itemize}}
\end{example}

\begin{conjecture}[of Connectedness]\label{connessione}
The graph of (simply connected) \cy 3--folds is connected. Then their moduli can be described by starting from the \emph{primitive} nodes given by def-classes of \cy 3-folds which do not admits any birational contraction landing to a projective normal 3-fold (in general those having Picard number 1).
\end{conjecture}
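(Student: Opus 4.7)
The plan is to attack Conjecture \ref{connessione} by a two-step reduction: first, an induction on Picard number (or a similar complexity invariant) that drives every simply connected projective \cy 3--fold down a chain of arrows to a \emph{primitive} node; second, a more delicate argument that all primitive nodes lie in one connected component of the Gross graph, in the spirit of Example \ref{esempio-web}.

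For the downward step, given a simply connected $Y$ with $\rho(Y)\geq 2$, I would apply the Minimal Model Program to the cone of curves of $Y$ to produce an extremal birational contraction $\phi:Y\to\overline{Y}$ onto a projective normal \cy 3--fold of strictly smaller Picard number. Because $Y$ is \cy, $\overline{Y}$ inherits at worst canonical singularities; by Friedman's smoothing criterion \cite{Friedman86} in the nodal case and by its extensions to terminal/canonical singularities (Namikawa), one obtains, under a cohomological hypothesis on the vanishing cycles, a smoothing $\widetilde{Y}$ and thus an arrow $\mathcal{D}_Y\to\mathcal{D}_{\widetilde{Y}}$. By Corollary \ref{cor:S-vK} for conifold transitions, and the analogous fact for the small transitions treated later in the paper, the passage $Y\rightsquigarrow\widetilde{Y}$ preserves $\pi_1$, so we stay inside the simply connected locus. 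Iterating, every non-primitive node is linked by a finite chain of arrows to at least one primitive one.

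Already at this step a technical subtlety appears: a fixed \cy model $Y$ need not admit a contraction whose singular target is smoothable, because Friedman's relation on vanishing cycles may fail. To circumvent this I would allow movement inside $\mathcal{D}_Y$ via flops or small birational modifications, since these preserve the \cy type while altering the set of available extremal contractions; one therefore needs a uniform statement ensuring that \emph{some} birational model in $\mathcal{D}_Y$ admits a smoothable contraction whenever $\rho(Y)\geq 2$. This is already nontrivial but seems within reach of known techniques.

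The genuinely hard step, and the one where I expect the real obstacle to lie, is the interconnection of the primitive nodes. By definition a primitive def-class admits no nontrivial birational contraction to a projective normal 3--fold, so no arrows \emph{emanate} from it; two primitive classes $\mathcal{M}_1$ and $\mathcal{M}_2$ can then be bridged only by producing a third, non-primitive class $\mathcal{M}_3$ carrying two distinct birational contractions whose targets are smoothable to elements of $\mathcal{M}_1$ and $\mathcal{M}_2$ respectively. Establishing such bridges for every pair of simply connected primitive classes would plausibly require a global input outside the MMP, such as a mirror-symmetric exchange of transitions (cf.\ the hyperconifold duality of Davies \cite{Davies10,Davies11}) or a uniform construction in the toric/Batyrev reflexive framework, and it is precisely the absence of such an input that accounts for Conjecture \ref{connessione} remaining open even in the simply connected case.
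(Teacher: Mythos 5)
The statement you are addressing is labelled a \emph{Conjecture} in the paper, and the paper offers no proof of it: the only support given is the computational evidence of Chiang--Greene--Gross--Kanter \cite{CGGK96}, who report that all known \cy hypersurfaces in 4-dimensional weighted projective spaces (7555 of them) fit into a single connected graph, together with the observation that many of the connecting arrows are general (non-conifold) transitions, which is why the paper ultimately proposes dropping the simply-connectedness hypothesis. Your text is therefore not comparable to any argument in the paper, and, as you yourself concede in your final paragraph, it is not a proof either: the interconnection of the primitive nodes is left entirely open, and the downward induction is conditional on an unproved ``uniform statement'' that some birational model in each non-primitive def-class admits a smoothable contraction. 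A proposal that ends by explaining why the statement remains open is a research plan, not a proof, and it should be presented as such.

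Beyond that structural point, two steps of the sketch would fail as written. First, the Minimal Model Program does not directly supply extremal contractions on a \cy 3-fold: $K_Y$ is trivial, so there are no $K_Y$-negative extremal rays, and one must instead invoke the structure of the boundary of the \ka cone (Wilson \cite{Wilson92}); even then the resulting birational targets need not be smoothable, since Friedman's criterion \cite{Friedman86} imposes a genuine relation on the classes of the exceptional curves that can fail for a fixed model --- this is exactly the gap you flag, but it means the ``downward step'' is not actually established even for a single $Y$. Second, your claim that each passage $Y\rightsquigarrow\widetilde{Y}$ preserves $\pi_1$ is only available in the paper for conifold transitions (Corollary \ref{cor:S-vK}) and, via def-equivalence, for simple ones (Remark \ref{rem:simple}); the paper explicitly notes that suitable type II transitions change the fundamental group, and Theorem \ref{thm:typeII} shows type II transitions are never simple, so a chain built from arbitrary extremal (in particular divisorial) contractions need not remain inside the simply connected locus at all.
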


\emph{A major evidence} for this conjecture is given by Chiang-Greene-Gross-Kanter in \cite{CGGK96} where the authors announced that, by computer procedure, \emph{it is possible to settle in a connected graph all known
examples of \cy hypersurfaces in a 4-dimensional weighted projective space (7555 \cy
3--folds)}. Actually this web is even bigger since many weighted hypersurfaces has been connected passing through hypersurfaces of more general toric, Fano 4-dimensional varieties.

\noindent Moreover, lot of arrows in the previous big connected graph are not generated by conifold transitions but they are represented by very general geometric transitions. In fact many nodes of such a big connected graph are def-classes of non simply connected \cy 3-folds, allowing us to drop the simply connectedness hypothesis in the statement of the Connectedness Conjecture \ref{connessione}.

\section{Deformation of a morphism}
Let $\phi:Y\rightarrow X$ be a morphism of complex spaces and let $B$ be a connected complex space with a special point $o\in B$ such that $g:(\mathcal{Y},Y)\rightarrow(B,o)$ and $f:(\mathcal{X},X)\rightarrow(B,o)$ are deformation families of $Y$ and $X$, respectively. Then a \emph{deformation family of the morphism $\phi$} is a morphism $\Phi:\mathcal{Y}\rightarrow\mathcal{X}$ such that the following diagram commutes
\begin{equation}\label{deformazione di morfismo}
\xymatrix{Y\ar@{^{(}->}[rrr]\ar[rd]^{\phi}\ar[ddr]&&&\mathcal{Y}\ar[dl]_{\Phi}\ar[ddl]^{g}\\
          &X\ar@{^{(}->}[r]\ar[d]&\mathcal{X}\ar[d]_{f}&\\
          &o\ar@{}[r]|{\in}&B&}
\end{equation}
with $Y=\overline{g}^{-1}(o)$, $X=f^{-1}(o)$
    and $\phi = \Phi_{|g^{-1}(o)}$.

    \noindent Given two distinct points $b_1,b_2\in B$ the morphism $\phi_2:=\Phi_{|g^{-1}(b_2)}$ is called a \emph{deformation} of the morphism $\phi_1:=\Phi_{|g^{-1}(b_1)}$ and viceversa.

    \noindent Let us now introduce a non-standard notation: the morphism deformation family $\mathcal{Y}\stackrel{\Phi}{\rightarrow}\mathcal{X}$ is called \emph{smooth} if the deformation family $\mathcal{Y}\stackrel{g}{\rightarrow}B$ is smooth. In this case $\phi_2$ will be called a \emph{smooth} deformation of $\phi_1$ and viceversa.

\subsection{Deformation equivalence of geometric transitions}
\begin{definition}\label{def:def-eq_gt} Two geometric transitions $T_1(Y_1,\overline{Y}_1,\widetilde{Y}_1)$ and $T_2(Y_2,\overline{Y}_2,\widetilde{Y}_2)$ are \emph{direct deformation equivalent} (i.e. \emph{direct def-equivalent}) if
\begin{itemize}
  \item[1.] $Y_1$ and $Y_2$ are both fibers of a same smooth deformation family $\mathcal{Y}\stackrel{f}{\to} B$ over an irreducible base $B$,
  \item[2.] $\widetilde{Y}_1$ and $\widetilde{Y}_2$ are both fibers of a same smooth deformation family $\widetilde{\mathcal{Y}}\stackrel{\widetilde{f}}{\to} \widetilde{B}$ over an irreducible base $\widetilde{B}$,
  \item[3.] $\overline{Y}_1$ and $\overline{Y}_2$ are both fibers of a same deformation family $\overline{\mathcal{Y}}\stackrel{\overline{f}}{\to} \overline{B}$ and, up to shrink $B$, there exist a map $\varphi:B\to\overline{B}$ and a morphism deformation family to the pull-back family $\varphi^*\overline{\mathcal{Y}}=B\times_{\overline{B}}\overline{\mathcal{Y}}$
      \begin{equation*}
    \xymatrix{\mathcal{Y}\ar[rr]^-{\Phi}\ar[dr]_{f}&&\varphi^*\overline{\mathcal{Y}}\ar[dl]^-{\varphi^*\overline{f}}\\
    &B&}
\end{equation*}
such that $\Phi_{|Y_i}=\phi_i$, for $i=1,2$. In particular the birational contractions $\phi_1:Y_1\rightarrow \overline{Y}_1$ and $\phi_2:Y_2\rightarrow \overline{Y}_2$ are smooth deformations of each other. With a slight abuse of notation, in the following we will denote the pull-back family $\varphi^*\overline{\mathcal{Y}}$ by $\overline{\mathcal{Y}}\stackrel{\overline{f}}{\to} B$.
\end{itemize}
The equivalence relation of geometric transitions generated by direct def-equivalence is called \emph{def-equivalence} (or \emph{deformation type}) of geometric transitions.
We will write $T_1\sim T_2$ for def-equivalent geometric transitions.
\end{definition}

Let us observe that the statement 4 at the beginning of section \ref{s:cy-web} defines what means that two nodes are connected by an arrow, but it does not give a concrete definition of what an arrow is. Actually an arrow is defined by a geometric transition connecting smooth elements of two nodes, meaning that an arrow and its defining geometric transition have to be thought of as the same object. On the other hand, notice that \emph{def-equivalent transitions} connect the same def-equivalence classes of \cy 3-folds i.e. the same nodes of the Gross \cy web. It seems then natural to redefine an arrow as follows:
\begin{definition}
  An arrow of the Gross \cy web is a def-equivalence class of geometric transitions.
\end{definition}
As a consequence we get that
\begin{itemize}
  \item \emph{the Gross \cy web is a sort of quotient, up to def equivalence, of the string theoretic \cy web restricted to the algebraic category.}
\end{itemize}

The previous Definition \ref{def:def-eq_gt} can be simplified by putting hypothesis on the singular loci of def-equivalent geometric transitions:

\begin{proposition}\label{prop:terminal} Let $T_1(Y_1,\overline{Y}_1,\widetilde{Y}_1)$ and $T_2(Y_2,\overline{Y}_2,\widetilde{Y}_2)$ be geometric transitions such that both $\Sing(\overline{Y}_1)$ and $\Sing(\overline{Y}_2)$ comprise at most isolated terminal singularities. Then $T_1$ and $T_2$ are direct def-equivalent if and only if their associated birational contractions $\phi_i:Y_i\rightarrow\overline{Y}_i$ are smooth deformations of each other.
\end{proposition}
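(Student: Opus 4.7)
The plan is to observe that the direction ($\Rightarrow$) is immediate from Definition~\ref{def:def-eq_gt}, and to use the Namikawa extension of Bogomolov--Tian--Todorov in the nontrivial converse. In the ($\Rightarrow$) direction, conditions (1) and (3) of Definition~\ref{def:def-eq_gt} give precisely a smooth family $\mathcal{Y}\to B$ fitting into a commutative diagram of the shape \eqref{deformazione di morfismo} with the family $\overline{\mathcal{Y}}\to B$, so that $\phi_1$ and $\phi_2$ are smooth deformations of each other by construction.

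For the converse ($\Leftarrow$), I would start from the hypothesis that $\phi_1,\phi_2$ are smooth deformations of each other: unpacked, this yields an irreducible base $B$ with points $b_1,b_2$, a smooth family $\mathcal{Y}\to B$ with fibers $Y_1,Y_2$, a family $\overline{\mathcal{Y}}\to B$ with fibers $\overline{Y}_1,\overline{Y}_2$, and a morphism $\Phi:\mathcal{Y}\to\overline{\mathcal{Y}}$ over $B$ restricting to the $\phi_i$. These data already provide conditions (1) and (3) of Definition~\ref{def:def-eq_gt}, so the whole problem reduces to verifying condition (2): exhibiting a smooth deformation family over an irreducible base containing $\widetilde{Y}_1$ and $\widetilde{Y}_2$ as fibers.

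To produce such a family I would move to the coarse moduli space $\mathcal{M}$ of Calabi--Yau $3$-folds, with its boundary of terminal or canonical degenerations as in Remark~\ref{rem:D_Y}. By the terminal singularity hypothesis and Namikawa's extension of Bogomolov--Tian--Todorov (recalled in Remark~\ref{rem:intersezione}), $\Def(\overline{Y}_i)$ is smooth, hence irreducible; Remark~\ref{rem:intersezione} then forces $\overline{Y}_i$ to lie in the closure of a \emph{unique} irreducible component $\mathcal{M}_i$ of $\mathcal{M}$, namely the def-equivalence class $\mathcal{D}_{\widetilde{Y}_i}$ of its smoothing, so in particular $\widetilde{Y}_i\in\mathcal{M}_i$. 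On the other hand, the irreducible family $\overline{\mathcal{Y}}\to B$ induces a classifying map from $B$ to $\mathcal{M}$ whose image is irreducible; since the classes of $\overline{Y}_1$ and $\overline{Y}_2$ both lie in this image, the uniqueness just noted forces $\mathcal{M}_1=\mathcal{M}_2$, and hence $\widetilde{Y}_1$ and $\widetilde{Y}_2$ lie in a common irreducible component of $\mathcal{M}$.

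Once this is established, the universality of the Kuranishi family for smooth Calabi--Yau manifolds (Corollary~\ref{cor:universale}) lets one realise $\widetilde{Y}_1$ and $\widetilde{Y}_2$ as fibers of a single smooth deformation family over an irreducible base, giving condition (2). The step I expect to require the most care is exactly this last one: going rigorously from ``$\widetilde{Y}_1,\widetilde{Y}_2$ belong to the same irreducible component of $\mathcal{M}$'' to ``$\widetilde{Y}_1,\widetilde{Y}_2$ are fibers of one smooth family over an irreducible base''. In the absence of a truly global universal family one has to patch together local Kuranishi families along the smooth, irreducible locus of $\mathcal{M}_1$ parametrising smooth Calabi--Yau structures and check that the resulting total space is indeed an honest smooth deformation family; everything else in the argument is purely formal once Namikawa's theorem is granted.
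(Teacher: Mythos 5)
Your proposal is correct and follows essentially the same route as the paper: the forward direction is read off from Definition~\ref{def:def-eq_gt}, and the converse reduces to condition (2), which is settled by invoking the terminal-singularity hypothesis together with Namikawa's extension of Bogomolov--Tian--Todorov (via Remark~\ref{rem:intersezione}) to conclude that each $\overline{Y}_i$, and hence the irreducible image of $B$, lies in the closure of a unique def-equivalence class containing both smoothings. The final passage you flag as delicate (from ``same irreducible component of the moduli space'' to an actual smooth family) is indeed glossed over in the paper as well, which simply concludes that $\widetilde{Y}_1$ and $\widetilde{Y}_2$ belong to the same def-equivalence class and relies on Remark~\ref{rem:irriducibilit�} for the identification.
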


\begin{proof} Assume that $T_1$ and  $T_2$ are direct def-equivalent. Then there exist two commutative diagrams
\begin{equation*}
\xymatrix{Y_i\ar@{^{(}->}[rrr]\ar[rd]^{\phi}\ar[ddr]&&&\mathcal{Y}\ar[dl]_{\Phi}\ar[ddl]^{f}\\
          &\overline{Y}_i\ar@{^{(}->}[r]\ar[d]&\overline{\mathcal{Y}}\ar[d]_{\overline{f}}&\\
          &b_i\ar@{}[r]|{\in}&B&}\ \quad (i=1,2)\ .
\end{equation*}
meaning that $\phi_1$ and $\phi_2$ are deformations of each other. Moreover $Y_1$ and $Y_2$ are direct def-equivalent \cy 3-folds, meaning that the morphism deformation family $\Phi$ has to be smooth.

\noindent Viceversa if $\phi_1$ and $\phi_2$ are smooth deformations of each other then the deformation family $\mathcal{Y}\stackrel{f}{\rightarrow}B$ in the previous diagrams is necessarily smooth, giving the direct def-equivalence of $Y_1$ and $ Y_2$. To prove that $\widetilde{Y}_1$ and $\widetilde{Y}_2$ are direct def-equivalent \cy 3--folds observe that, under the notation of Definition \ref{def:def-eq_gt}, the generic fibre of the pull-back family $\left\{\overline{\mathcal{Y}}\stackrel{\overline{f}}{\rightarrow}B\right\}=\left\{\varphi^*\overline{\mathcal{Y}}\stackrel{\varphi^*\overline{f}}
{\longrightarrow}B\right\}$ admits at most isolated terminal singularities. Recalling Remark \ref{rem:intersezione}, we are able to assume that the image $\varphi(B)$ may live over an irreducible component of $\overline{B}$ belonging to the closure of a unique def-equivalence class of \cy threefolds. Since $\widetilde{Y}_i$ is a smoothing of $\overline{Y}_i$ this suffices to prove that both $\widetilde{Y}_1$ and $\widetilde{Y}_2$ have to belong to the same def-equivalence class.
\end{proof}

Since def-equivalence is the equivalence relation generated by direct def-e\-qui\-va\-len\-ce, the previous Proposition \ref{prop:terminal} gives immediately the following

\begin{corollary}\label{cor:terminal} Let $T_1(Y_1,\overline{Y}_1,\widetilde{Y}_1)$ and $T_2(Y_2,\overline{Y}_2,\widetilde{Y}_2)$ be geometric transitions such that both $\Sing(\overline{Y}_1)$ and $\Sing(\overline{Y}_2)$ comprise at most isolated terminal singularities. Then $T_1\sim T_2$ if and only if their associated birational contractions $\phi_i:Y_i\rightarrow\overline{Y}_i$ are connected by a finite chain of smooth morphism deformations.
\end{corollary}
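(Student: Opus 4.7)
The plan is to derive Corollary~\ref{cor:terminal} as a formal consequence of Proposition~\ref{prop:terminal}, exploiting the fact that $\sim$ is by construction the equivalence relation generated by direct def-equivalence. In both directions the proof reduces to ``threading'' Proposition~\ref{prop:terminal} along a finite chain.

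For the implication $T_1\sim T_2\Rightarrow$ chain, I would unfold the definition of $\sim$ to obtain a finite sequence of geometric transitions $T_1=S_0,S_1,\dots,S_n=T_2$ in which every consecutive pair is direct def-equivalent. Condition~1 of Definition~\ref{def:def-eq_gt} forces the source family witnessing each step to be smooth, hence the morphism deformation family $\Phi_j$ provided by condition~3 is automatically a smooth morphism deformation in the non-standard sense introduced in Section~7; concatenating the $\Phi_j$'s produces the required chain of smooth morphism deformations from $\phi_1$ to $\phi_2$. Observe that this direction does not even require the terminal singularity hypothesis.

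For the converse, given a chain $\phi_1=\psi_0,\psi_1,\dots,\psi_n=\phi_2$ of smooth morphism deformations, I would build geometric transitions $S_j$ around each intermediate $\psi_j$ so as to obtain a chain of direct def-equivalent transitions $S_0=T_1,S_1,\dots,S_n=T_2$. At each step the target of $\psi_j$ arises as a small morphism deformation of the previous target; by Namikawa's extension of Bogomolov--Tian--Todorov recalled in Remark~\ref{rem:intersezione}, the Kuranishi space of an isolated terminal threefold singularity is unobstructed, so both the smoothability of the target and the isolated-terminal nature of its singular locus propagate along the chain. Proposition~\ref{prop:terminal} can then be applied at each step to give direct def-equivalence of $S_{j-1}$ and $S_j$, and transitivity of $\sim$ concludes that $T_1\sim T_2$.

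The main obstacle lies in this converse direction: one must guarantee that the intermediate morphisms in the chain extend to genuine geometric transitions whose targets still carry at most isolated terminal singularities, which is precisely the role of the unobstructedness of the Kuranishi space of a terminal threefold singularity. Without this openness property, the chain could leave the locus of smoothable terminal targets and lose the geometric-transition structure needed to apply Proposition~\ref{prop:terminal} step by step.
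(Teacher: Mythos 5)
Your proposal follows essentially the same route as the paper, which derives the corollary in a single line by observing that def-equivalence is the equivalence relation generated by direct def-equivalence and then threading Proposition~\ref{prop:terminal} along the resulting finite chain. Your additional care in the converse direction (verifying that the intermediate targets remain smoothable with at most isolated terminal singularities) is a reasonable elaboration of a point the paper treats as immediate and does not spell out.
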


Let us say that hypothesis of the previous Proposition \ref{prop:terminal} and Corollary \ref{cor:terminal} are satisfied e.g. by \emph{small} geometric transitions (see the following Definition \ref{def:small_gt}).

\begin{remark}\label{rem:fisica}{\rm  \emph{The physics ``passes through'' def equivalence}, in the sense that it is \emph{def-equivariant}.

\noindent In fact a conformal field theory on a \cy 3-fold $Y$ is the datum of a point $\chi$ on the \emph{complexified \ka cone}
$$\mathcal{K}_{\C}(Y):=\left.\left\{\chi\in H^2(Y,\C)\ |\ \Im(\chi)\in\mathcal{K}(Y)\right\}\right/H^2(Y,\Z)$$
where $\mathcal{K}(Y)$ is the \ka cone of $Y$ and the action of integral cohomology is additive on the real part $\Re(\chi)$. In fact the imaginary part $\omega:=\Im(\chi)$ gives the \ka, Ricci flat metric of $Y$, which is the \emph{geo-metric properties of the supersymmetric string theoretic vacuum}, while the real part $b:=\Re(\chi)$ gives the so called \emph{b-field} describing \emph{the strings' charge properties} of the theory.

\noindent If $Y_1\sim Y_2$ are two def equivalent \cy 3-folds, then there exists an orientation preserving diffeomorphism $f:Y_1\cong Y_2$. In general $f$ induces a contravariant isomorphism $f^*:\mathcal{K}_{\C}(Y_2)\cong \mathcal{K}_{\C}(Y_1)$ (\cite{Wilson92}, Main Thm.). Then the physical theories $(Y_2,\chi)$ and $(Y_1, f^*(\chi))$ are isomorphic. In a special case, it can happen that we are dealing with a couple $(Y,\chi)$ where $Y$ contains a conic bundle over an elliptic curve: in this case, if $Y'$ is a general smooth deformation of $Y$ then the associated orientation preserving diffeomorphism $f:Y'\cong Y$ gives rise to a strict inclusion $f^*:\mathcal{K}_{\C}(Y)\hookrightarrow \mathcal{K}_{\C}(Y')$ (see \cite{Wilson97}) meaning that one can always reduce the physical theory $(Y,\chi)$ to the isomorphic general theory $(Y',f^*(\chi))$.}
\end{remark}

\section{Simple geometric transitions}

Since both in mathematics and in physics the conifold transitions are the most understood geometric transitions between \cy 3--folds, it makes sense to ask \emph{when a geometric transition is def-equivalent to a conifold one.}
Let us then set the following

\begin{definition}[Simple geometric transitions and arrows]\label{def:simple_gt} A g.t. is called \emph{simple} if it is def-equivalent to a conifold transition. Therefore an arrow is called simple if it is the def-equivalence class of a conifold transition.
\end{definition}

\begin{remark}[The importance of being simple]\label{rem:simple}{\rm If $T(Y,\overline{Y},\widetilde{Y})$ is a simple g.t. then, for what observed above:
\begin{itemize}
  \item it is physically well understood by the Remark \ref{rem:fisica},
  \item there exist finite open coverings $\{U_i\}_{i=1}^N$ and $\{\widetilde{U}_i\}_{i=1}^N$ of $Y$ and $\widetilde{Y}$, respectively, and almost everywhere defined diffeomorphisms from $U_i$ to $\widetilde{U}_i$  $$\a_i:U_i\backslash\Exc(\phi)\stackrel{\cong}{\longrightarrow}\widetilde{U}_i\backslash\mathcal{V}\ ,\ 1\leq i\leq N\ ,$$
      where $\mathcal{V}\subset\widetilde{Y}$ is the vanishing locus. In particular $Y$ and $\widetilde{Y}$ have the same fundamental group.
  \end{itemize}
  In fact if $T'(X,\overline{X},\widetilde{X})$ is a conifold t. with $T\sim T'$, then there are orientation preserving diffeomorphisms $Y\cong X$ and $\widetilde{Y}\cong \widetilde{X}$. Then end up by applying Theorem \ref{clemens lemma} and Corollary \ref{cor:S-vK}. In particular the cardinality $N$ of the open coverings is that of the singular locus of $\overline{X}$, i.e. $N=|\Sing(\overline{X})|$.}
\end{remark}

\subsection{Type II geometric transitions are not simple}

\begin{definition}\label{def:typeII_gt} A \emph{type II geometric transition} $T(Y,\overline{Y},\widetilde{Y})$ is a g.t. such that
\begin{itemize}
  \item the associated birational morphism $\phi:Y\rightarrow\overline{Y}$ is \emph{primitive}, i.e. it cannot be factored into birational morphisms of normal varieties,
  \item $\phi$ contracts a divisor down to a point; in
    this case the exceptional divisor $E$ is irreducible and in particular it is a
    (generalized) \emph{del Pezzo surface} (see \cite{Reid80}).
\end{itemize}
\end{definition}

\begin{example}{\rm The g.t. $T(Z,\overline{Y},\widetilde{Y})$ representing the arrow $\mathcal{M}_T\longrightarrow\mathcal{M}_Q$ in the Example \ref{esempio-web}, is a type II g.t.}
\end{example}

By exhibiting a suitable weighted blow down, one can easily produce a type II g.t. $T(Y,\overline{Y},\widetilde{Y})$ such that $Y$ and $\widetilde{Y}$ do not admit the same fundamental group. Then $T$ \emph{cannot be a simple g.t.} due to the previous Remark \ref{rem:simple} and Corollary \ref{cor:S-vK}. Actually a much stronger result can be established:

\begin{theorem}\label{thm:typeII} A type II g.t. is never simple.
\end{theorem}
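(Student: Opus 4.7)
The plan is to argue by contradiction, combining the Ehresmann-type invariance coming from the smooth families in the chain of direct def-equivalences with the extra topological control supplied by Remark \ref{rem:simple}. Suppose $T=T(Y,\overline{Y},\widetilde{Y})$ is a type II g.t.\ which happens to be simple. Then there exists a conifold g.t.\ $T'=T(X,\overline{X},\widetilde{X})$ and a chain $T=T_0\sim T_1\sim\cdots\sim T_n=T'$ of direct def-equivalences. Iterating Ehresmann's theorem on the smooth families $\mathcal{Y}_i\to B_i$ and $\widetilde{\mathcal{Y}}_i\to\widetilde{B}_i$ of Definition \ref{def:def-eq_gt} yields orientation-preserving diffeomorphisms $Y\cong X$ and $\widetilde{Y}\cong\widetilde{X}$. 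Transporting the Clemens surgery description of $T'$ (Theorem \ref{clemens lemma}) along these diffeomorphisms, as in the proof of Remark \ref{rem:simple}, endows the pair $(Y,\widetilde{Y})$ with the Clemens-surgery structure of a conifold transition with, say, $k=|\Sing(\overline{X})|$ nodes and $r$ homological relations.

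The core of the argument is then to confront this structure with the numerical invariants forced by the type II nature of $T$. Since $\phi:Y\to\overline{Y}$ is primitive and divisorial, the class $[E]\in H^2(Y,\Z)$ of the irreducible (generalized) del Pezzo exceptional divisor generates the rank--$1$ cokernel of $\phi^*:H^2(\overline{Y},\Z)\to H^2(Y,\Z)$; moreover, for an isolated Gorenstein canonical $3$--fold singularity the smoothing $\widetilde{Y}$ satisfies $H^2(\overline{Y})\cong H^2(\widetilde{Y})$, hence $b_2(Y)-b_2(\widetilde{Y})=1$. Comparing with the Clemens formulas $b_2(X)-b_2(\widetilde{X})=k-r$ and $b_3(\widetilde{X})-b_3(X)=2r$ for $T'$, together with the identities $\chi=2+2b_2-b_3$ on a \cy $3$--fold and $\chi(\widetilde{Y})-\chi(Y)=\chi(F)-\chi(E)$ (where $F$ is the Milnor fibre of the singularity of $\overline{Y}$), one obtains the two necessary equalities
$$k-r=1\qquad\text{and}\qquad \chi(E)-\chi(F)=2k.$$

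The final, and main, obstacle is to exhibit the contradiction hidden in these constraints. The idea is that simplicity forces the transport of $[E]$ under $Y\cong X$ to coincide with the unique (because $k-r=1$) Weil-class-resolution divisor $\widehat{D}\subset X$ producing the extra rank of $\Pic(X)$ over $\phi'{}^*\Pic(\overline{X})$; equivalently, the irreducible generalized del Pezzo $E$ and the divisor $\widehat{D}$ must have equal intersection numbers with every preserved class (in particular $[E]^3=K_E^2=\deg E\in\{1,\dots,9\}$ and $c_2(Y)\cdot[E]=\chi(E)-\deg E$ must be matched by the corresponding triple products on $X$). The plan is to show, by combining the adjunction-based cup product calculation on $E$ with the structure of Weil divisors at an ordinary double point, that no admissible quadruple $(E,F,k,r)$ satisfies all of these constraints simultaneously; this is where the argument is most delicate and where one genuinely uses that the $k$ nodes of $\overline{X}$ cannot mimic the smoothing of an isolated cone-like canonical singularity supporting an irreducible $(-K_E)$-ample exceptional surface. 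Once this incompatibility is established the contradiction is complete, and no type II g.t.\ can be simple.
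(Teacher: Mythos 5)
There is a genuine gap, and it sits exactly where you acknowledge the argument is ``most delicate'': the final incompatibility of the quadruple $(E,F,k,r)$ is announced as a plan but never established. The two constraints you do extract, $k-r=1$ and $\chi(E)-\chi(F)=2k$, are not visibly contradictory --- nothing prevents a conifold transition with, say, $k$ nodes and $r=k-1$ relations from matching the Betti numbers and Euler characteristics of a type II transition, and the further intersection-theoretic matching you invoke ($[E]^3$, $c_2\cdot[E]$ against Weil divisor classes at nodes) is not carried out and is not obviously obstructed either. More fundamentally, your strategy throws away the strongest piece of data in Definition \ref{def:def-eq_gt}: condition 3 deforms the \emph{morphisms} $\phi_i$ inside a single family $\Phi:\mathcal{Y}\to\overline{\mathcal{Y}}$ over an irreducible base, not merely the endpoints $Y$ and $\widetilde{Y}$ up to diffeomorphism. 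Working only with diffeomorphism invariants transported by Ehresmann is likely too weak: the paper's Example \ref{ex:nami} already shows that deciding simplicity requires deformation-theoretic input beyond topology.

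The paper's proof exploits precisely this morphism-level information. Since the conifold side has exceptional locus a disjoint union of $(-1,-1)$-curves, which are stable submanifolds in the sense of Kodaira, every fibre $\Phi_b:Y_b\to\overline{Y}_b$ near $b_2$ is again a conifold contraction; hence $\overline{Y}_b$ is non-$\Q$-factorial for all $b\neq b_1$ near $b_1$, while the type II target $\overline{Y}_1$ (image of a divisorial contraction from a smooth threefold) \emph{is} $\Q$-factorial. This contradicts the Koll\'ar--Mori theorem (\cite{Kollar-Mori92}, Thm.\ (12.1.10)) that $\Q$-factoriality is an open condition in the family $\overline{\mathcal{Y}}\to B$; an induction along the chain of direct def-equivalences then finishes the proof. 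If you want to salvage your approach you would need either to complete the numerical case analysis (and I doubt it closes), or to reintroduce the deformation of the contraction itself, at which point you are led back to an argument of the Kodaira-stability/$\Q$-factoriality type.
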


\noindent\textbf{Proof.} Let us first of all show that a type II g.t. $T_1(Y_1,\overline{Y}_1,\widetilde{Y}_1)$ cannot be direct def-equivalent to a conifold $T_2(Y_2,\overline{Y}_2,\widetilde{Y}_2)$. On the contrary, let us assume the existence of a deformation family of morphisms
\begin{equation*}
    \xymatrix{\mathcal{Y}\ar[rr]^-{\Phi}\ar[dr]_-f&&\overline{\mathcal{Y}}\ar[dl]^-{\overline{f}}\\
    &B&}
\end{equation*}
over an irreducible base $B$ and such that $\mathcal{Y}\stackrel{f}{\rightarrow}B$ is a smooth family realizing the direct def-equivalence of $Y_1$ and $Y_2$. In particular there exist two distinct points $b_1,b_2\in B$ such that $\Phi_{b_i}:=\Phi_{|f^{-1}(b_i)}=\phi_i:Y_i\rightarrow\overline{Y}_i$. Since $T_2$ is conifold then $\Exc(\phi_2)$ is composed by a finite number of disjoint smooth rational curves whose normal bundle is given by $\mathcal{O}_{\P^1}(-1)\oplus\mathcal{O}_{\P^1}(-1)$ (so called \emph{$(-1,-1)$-curves}). Then any $(-1,-1)$-curve is a \emph{stable submanifold} of $Y_2$ in the sense of Kodaira \cite{Kodaira63}. For this reason, up to shrink the irreducible base $B$, we may now assume $B$ to be a suitable neighborhood $B^{\circ}$ of $b_2$ with the addition of a closure point given by $b_1$, such that $\Phi_{b}:Y_b\rightarrow\overline{Y}_b$ turns out to be a c.t. for any $b\in B$ with $b\neq b_1$. The contradiction is then reached by observing that $\overline{Y}_1$ is $\Q$-factorial while $\overline{Y}_b$ can never be $\Q$-factorial for any $b\neq b_1$: this fact is against a result of J.~Koll\'{a}r and S.~Mori (\cite{Kollar-Mori92} Thm. (12.1.10)) guaranteeing that $\Q$-factoriality of the fibers has to be an \emph{open condition} for the pull-back deformation family $\overline{f}:\overline{\mathcal{Y}}\rightarrow B$ i.e. that there should exist an open neighborhood of $b_1\in B$ over which all fibers should be $\Q$-factorial.

\noindent Let us now assume that $T_1\sim T_2$, meaning that there exist a finite sequence of smooth morphism deformation families connecting the birational contractions $\phi_1$ and $\phi_2$. Starting from the last family, the previous argument shows that this family cannot admit a type II birational contraction as a morphism fiber. In particular this holds for the common morphisms to the last and the penultimate families. Hence the same arguments shows that the penultimate family cannot admit a type II birational contraction as a morphism fiber, and so on until we land at the first family giving an absurd.

\subsection{An example of a non-simple small geometric transition}

\begin{definition}[Small g.t.]\label{def:small_gt} A g.t. $T(Y,\overline{Y},\widetilde{Y})$ is called \emph{small} if the associated birational morphism $\phi:Y\rightarrow \overline{Y}$ is a \emph{small birational contraction}, i.e. its exceptional locus $\Exc(\phi)$ has codimension greater than 1 in $Y$.
\end{definition}

Possible exceptional and singular loci occurring in a small g.t. are completely classified (see \cite{R1}, Thm. 6 and references therein):
\begin{itemize}
  \item $\Sing(\overline{Y})$ turns out to be composed by a finite number of isolated \emph{compound Du Val (cDV)} singular points, which in particular are terminal singularities,
  \item $\Exc(\phi)$ is then composed by a finite number of trees of transversally intersecting rational curves, dually represented by ADE Dynkin graphs.
\end{itemize}

Due to the particular geometry of the exceptional locus $\Exc(\phi)$ it is quite natural to ask for the \emph{simplicity of any small geometric transitions}. Unfortunately this is not the case, as the following example shows.

\begin{example}\label{ex:nami}{\rm
The following example is essentially due to Y.~Namikawa (\cite{Namikawa02}, Example 1.11).

Let $S$ be the rational elliptic surface with
sections obtained as the Weierstrass fibration associated with the
bundles homomorphism
\begin{eqnarray}\label{W-Nami}
    (0,B)&:&\xymatrix@1{\mathcal{E}=\mathcal{O}_{\P^1}(3)\oplus\mathcal{O}_{\P^1}(2)
\oplus\mathcal{O}_{\P^1}\ \ar[rr]&&\quad
    \mathcal{O}_{\P^1}(6)}\\
\nonumber
    &&\xymatrix@1{\hskip1cm (x,y,z)\hskip1cm\ar@{|->}[rr]&&\quad - x^2z + y^3 + B(\lambda)\
    z^3}
\end{eqnarray}
for a generic $B\in H^0(\P^1,\mathcal{O}_{\P^1}(6))$ i.e. $S$ is
the zero locus of $(0,B)$ in the projectivized bundle
$\P(\mathcal{E})$. Then:
\begin{itemize}
    \item[1.] \emph{the natural fibration $S\rightarrow\P^1$ has generic smooth fibre
and 6 distinct cuspidal fibres},
    \item[2.] \emph{the fiber product $X:= S\times_{\P^1}S$ is a threefold admitting 6
    singularities of type $II\times II$, in the standard Kodaira
    notation} \cite{Kodaira64},
    \item[3.] \emph{$X$ admits a \emph{small} resolution $\widehat{X}\stackrel{\phi}{\longrightarrow} X$
    whose exceptional locus is composed by 6 disjoint couples of rational curves
    intersecting in one point i.e. 6 disjoint $A_2$ exceptional
    trees,}
    \item[4.] \emph{by results of C.~Schoen, $X$ is a special fibre of the family of fiber products $S_1\times_{\P^1} S_2$
    of rational elliptic surfaces with sections: in particular for $S_1$ and $S_2$
    sufficiently general $\widetilde{X}=S_1\times_{\P^1}S_2$ is a \cy
    threefold giving a smoothing of $X$} (\cite{Schoen88} \S 2).
\end{itemize}
Since $\phi$ is a small, crepant resolution, $\widehat{X}$ turns
out to be a \cy threefold and $T(\widehat{X},X,\widetilde{X})$
is a small \emph{non--conifold} g.t.. Let $p$ be one of the six
singular points of $X$, locally defined as a germ of singularity
by the polynomial
$$
F:=x^2-z^2-y^3+w^3\in \C[x,y,z,w]\ .
$$
Consider the \emph{localization near
to $p$}
\begin{equation}\label{localizzazione}
    \xymatrix{\widehat{U}_p:=\phi^{-1}(U_p)\ar@{^{(}->}[r]\ar[d]^-{\varphi}
    & \widehat{X}\ar[d]^-{\phi}\\
    U_p:=\Spec\mathcal{O}_{F,p}\ar@{^{(}->}[r]&X}
\end{equation}
which induces, since $p$ is a rational singularity, the following
commutative diagram of maps between Kuranishi spaces
\begin{equation}\label{localizzazione-def}
    \xymatrix{\Def(\widehat{X})\ar[r]^-{\widehat{l}_p}
    \ar@{^{(}->}[d]^-{D}&\Def(\widehat{U}_p)\ar@{^{(}->}[d]^-{D_{loc}}\\
    \Def(X)\ar[r]^-{l_p}&\Def(U_p)\cong T^1_{U_p}}
\end{equation}
where the horizontal maps are the natural localization maps while
the vertical maps are \emph{ injective maps} induced by the
resolution $\phi$ (see \cite{Wahl76} Propositions 1.8 and 1.12,
\cite{Kollar-Mori92} Proposition (11.4)). Then, by explicit calculations (see \cite{Rnami1}, Thm. 4), it turns out that
\begin{equation}\label{nami-vanishing}
    \left.\begin{array}{c}
      \dim \Def(\widehat{U}_p)=1 \\
      \im(l_p)\cap\im(D_{loc})= 0 \\
    \end{array}\right\}\ \Longrightarrow\ \im(\widehat{l}_p) = 0
\end{equation}
meaning that
\begin{itemize}
    \item[(5)] \emph{no global deformation of $\widehat{X}$ may induce a local non--trivial
    deformation of $\widehat{U}_p$; in particular $\phi^{-1}(p)$ turns out to be a \emph{rigid}
    $A_2$ exceptional tree and $T(\widehat{X},X,\widetilde{X})$ cannot be def-equivalent to a conifold
    transition}.
\end{itemize}}
\end{example}

\subsection{A necessary condition for simplicity of small transitions}

The previous example allows us to understand some further necessary condition that a small g.t. should satisfy to be a simple g.t.:

\begin{proposition}\label{prop:small&simple} Recall the definition of $\Theta_{\bullet}$ as the `tangent'' sheaf. Then if $T(Y,\overline{Y},\widetilde{Y})$ is a simple small geometric transition of \cy 3--folds then
\begin{equation}\label{cohom-condizione}
    h^1(\overline{Y},\Theta_{\overline{Y}})<h^1(Y,\Theta_Y)\ .
\end{equation}
\end{proposition}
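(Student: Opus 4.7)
The plan is to apply the Leray spectral sequence to the small contraction $\phi:Y\to\overline{Y}$ and to convert the simplicity hypothesis into the non-vanishing of the resulting edge map. First I would identify $\phi_*\Theta_Y\cong\Theta_{\overline{Y}}$: both sheaves are reflexive of rank three on the normal variety $\overline{Y}$ and they coincide on the smooth locus, whose complement $\Sing(\overline{Y})$ has codimension three since it consists of isolated cDV points; a reflexive sheaf on a normal variety is determined by its restriction to the complement of any codimension-two subset, so the two sheaves are isomorphic. Using this identification, the Leray spectral sequence produces the edge exact sequence
\begin{equation*}
0\to H^1(\overline{Y},\Theta_{\overline{Y}})\to H^1(Y,\Theta_Y)\stackrel{\eta}{\to} H^0(\overline{Y},R^1\phi_*\Theta_Y)\to H^2(\overline{Y},\Theta_{\overline{Y}}),
\end{equation*}
which shows at once that $h^1(\overline{Y},\Theta_{\overline{Y}})\leq h^1(Y,\Theta_Y)$, with equality precisely when $\eta$ is the zero map. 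The proposition is thus reduced to proving that $\eta\not\equiv 0$ under the simplicity hypothesis.

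To read $\eta$ in local terms, I would rewrite it through the Namikawa-style localization diagram (\ref{localizzazione-def}) of Example \ref{ex:nami}. The sheaf $R^1\phi_*\Theta_Y$ is concentrated on $\Sing(\overline{Y})$, and by the theorem on formal functions its stalk at $p\in\Sing(\overline{Y})$ records the cohomology $\varprojlim H^1(C_n,\Theta_Y|_{C_n})$ along the infinitesimal neighbourhoods of the exceptional tree $\phi^{-1}(p)$; this limit is canonically identified with the Kuranishi space $\Def(\widehat{U}_p)$ of the local small resolution in the notation of (\ref{localizzazione}). Under this identification the component $\eta_p$ of the edge map is precisely the Namikawa localization $\widehat{l}_p:\Def(Y)\to\Def(\widehat{U}_p)$, so $\eta\neq 0$ exactly when $\widehat{l}_p\neq 0$ for some $p\in\Sing(\overline{Y})$.

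The simplicity hypothesis then enters in the following way. By Corollary \ref{cor:terminal}, $T$ is connected to a conifold transition $T'(X,\overline{X},\widetilde{X})$ through a finite chain of smooth morphism deformations, so there exists a morphism deformation family $\Phi:\mathcal{Y}\to\overline{\mathcal{Y}}$ over some irreducible base along which the singular type of the fibres $\overline{Y}_b$ degenerates from cDV to purely nodal. The associated Kodaira--Spencer class for $\Phi$ infinitesimally splits at least one cDV point $p$ of $\overline{Y}$ into the nodal configuration of $\overline{X}$; being induced by a smooth family $\mathcal{Y}\to B$, this splitting lifts to a non-zero element $v\in\Def(Y)=H^1(Y,\Theta_Y)$ whose image in $\Def(\widehat{U}_p)$ is the corresponding local deformation class. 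Hence $\widehat{l}_p(v)\neq 0$, which in view of the second step gives $\eta_p\neq 0$, and the edge exact sequence of the first step yields the strict inequality $h^1(\overline{Y},\Theta_{\overline{Y}})<h^1(Y,\Theta_Y)$. The main obstacle is this last step: certifying that the morphism deformation produced by the chain has a genuinely non-trivial image in the local Kuranishi space $\Def(\widehat{U}_p)$ for some singular $p$; this is the precise cohomological translation of the condition $\im(\widehat{l}_p)\neq 0$ whose failure disqualified the Namikawa transition of Example \ref{ex:nami}.
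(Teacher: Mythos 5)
Your proposal is correct and follows essentially the same route as the paper: the Leray spectral sequence for the small contraction together with the identification $R^0\phi_*\Theta_Y\cong\Theta_{\overline{Y}}$ reduces the inequality to the non-vanishing of the edge map, which is then extracted from the simplicity hypothesis via a first-order deformation of $Y$ acting non-trivially on the local deformation spaces of the singular points. The only cosmetic difference is that you localize ``upstairs'' through $\Def(\widehat{U}_p)$ in the spirit of diagram (\ref{localizzazione-def}), whereas the paper localizes ``downstairs'' into $T^1_{\overline{Y}}=H^0(\overline{Y},\Theta^1_{\overline{Y}})$ using the injectivity of Friedman--Wahl's map $\delta_{loc}$; the step you flag as the main obstacle is asserted at the same level of detail in the paper's own proof.
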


\begin{proof} The proof is an application of R.~Friedman techniques presented in \cite{Friedman86}. In fact by the Leray spectral sequence applied to the birational small contraction $\phi:Y\rightarrow\overline{Y}$ and the local to global spectral sequence relating the global $\T^{\bullet}_{\overline{Y}}:=\Ext^{\bullet}(\Omega_{\overline{Y}},\mathcal{O}_{\overline{Y}})$ with the sheaves $\Theta^{\bullet}_{\overline{Y}}:=\ext^{\bullet}(\Omega_{\overline{Y}},\mathcal{O}_{\overline{Y}})$, one gets the following commutative diagram
\begin{equation}\label{Friedman-diagramma}
    \xymatrix{0\rightarrow H^1(Y,R^0\phi_*\Theta_Y)\ar[r]\ar@<10pt>@{=}[d] &
    \mathbb{T}^1_Y\ar[r]^-{\lambda}\ar[d]^{\delta} &
    H^0\left(\overline{Y},R^1\phi_*\Theta_Y\right)\ar[r]\ar[d]^{\delta_{loc}}& \cdots\\
    0\rightarrow H^1(\overline{Y},\Theta_{\overline{Y}})\ar[r]&\mathbb{T}^1_{\overline{Y}}\ar[r]^-{\overline{\lambda}}
     &T^1_{\overline{Y}}:=H^0(\overline{Y},\Theta^1_{\overline{Y}})\ar[r]& \cdots}
\end{equation}
where
\begin{itemize}
  \item the vertical equality comes from an application of Hartogs Theorem giving $R^0\phi_*\Theta_{Y}\cong \Theta_{\overline{Y}}$ (\cite{Friedman86} Lemma (3.1)),
  \item the vertical morphism $\delta$ is the differential of an injective map between Kuranishi spaces $\Def(Y)\hookrightarrow\Def(\overline{Y})$, constructed by J.M.~Wahl \cite[$\S1$]{Wahl76} (see also \cite[Prop.~(2.1)]{Friedman86} and \cite[Prop.~(11.4)]{Kollar-Mori92}), which turns out to be still injective since $\delta_{loc}$ is injective,
  \item $\delta_{loc}$ is the localization of $\delta$ near to $\Sing(\overline{Y})$, which is injective by a result of Friedman (\cite{Friedman86}, Prop. (2.1)).
\end{itemize}
By (\ref{T-lisci}), $\T^1_Y\cong H^1(Y,\Theta_Y)$ and the fact that $T$ is a simple g.t. guarantees the existence of a global deformation of $Y$ inducing a first order deformation $\xi\in H^1(Y,\Theta_Y)$ of $Y$ such that $\xi_{loc}:=\delta_{loc}\circ\lambda(\xi)$ gives non trivial first order deformations of \emph{any} singularity $p\in\Sing(\overline{Y})$ to ordinary double points. Since $\delta_{loc}$ is injective, this means that $\im\lambda\neq 0$ and the exactness of the upper sequence in (\ref{Friedman-diagramma}) gives necessarily the cohomological condition (\ref{cohom-condizione}).
\end{proof}

\begin{remark} {\rm Back to the Namikawa's example \ref{ex:nami}, let us observe that for the g.t. $T(\widehat{X},X,\widetilde{X})$, where $X=S\times_{\P^1}S$ is the fibred self-product of a cuspidal elliptic surface, one gets $h^1(\overline{Y},\Theta_{\overline{Y}})=h^1(Y,\Theta_Y)=3$.}
\end{remark}

\subsection{An example of a simple small geometric transition}\label{ex:small&simple}

Let us consider the singular quintic threefold $\overline{Q}\subset\P^4$ given by
\begin{equation}\label{quintica}
    u(u-2x)(u-3y)(x^2-y^2)-(z^5-w^5)=0\ .
\end{equation}
The singular locus $\Sing(\overline{Q})$ is composed by 10 isolated hypersurface singularities, each of them analytically equivalent to the one described by the local equation
\begin{equation}\label{local-eq}
    x^2-y^2=z^5-w^5
\end{equation}
which is a $cA_4$ singular point whose Milnor and Tyurina numbers are equal to 16. A resolution of this singular point is obtained by a \emph{successive blow up} of the planes
$$\pi_i:x-y=z-\epsilon^iw=0\quad,\quad 0\leq i\leq 3\ ,\ \epsilon^5=1\ .$$
More precisely: blow up $\C^4$ along $\pi_0$, then blow up again along the strict transform of $\pi_1$ and so on. At the end look at the strict transform of the singularity, which carries an exceptional locus composed by a tree of 4 lines dually represented by the Dynkin graph $A_4$.

\noindent We are now in a position to construct a non-conifold geometric transition as follows:
\begin{itemize}
  \item \emph{the resolution:} the quintic threefold $\overline{Q}$ admits a global resolution $\widehat{Q}$ which can be obtained by the successive blow up of 16 planes
      $$\pi_i^j:l_j=z-\epsilon^i w=0\quad 0\leq i\leq 3\ ,\ 1\leq j\leq 4$$
      where $\{l_1,\ldots,l_4\}\subset\{u,u-2x,u-3y,x-y,x+y\}$.
  \item \emph{the smoothing:} it is obviously given by a smooth quintic threefold $Q\subset\P^4$.
\end{itemize}
This gives the g.t. $T(\widehat{Q},\overline{Q},Q)$.
To deform $T$ to a conifold transition consider the following deformation $\overline{Q}_{(a,b,c)}$ of $\overline{Q}$
$$
 u(u-2x)(u-3y)(x^2-y^2)-(z-w)(z-\epsilon w)(z-\epsilon^2w+a)(z-\epsilon^3w+b)(z-\epsilon^4w+c)=0
$$
which, for a general $\alpha:=(a,b,c)\in\C^3$, splits up each singular point of $\overline{Q}$ into 10 nodes, hence giving 100 nodes. Since the deformation $\overline{Q}_{\alpha}$ respects the factorization in the equation of $\overline{Q}$, it lifts to a deformation $\widehat{Q}_{\alpha}$ of the resolution $\widehat{Q}$ splitting up every exceptional $A_4$ tree into 10 disjoint lines. This gives a deformation family of morphisms
\begin{equation*}
    \xymatrix{\widehat{\mathcal{Q}}\ar[rr]^-{\Phi}\ar[dr]_-f&&\overline{\mathcal{Q}}\ar[dl]^-g\\
    &\C^3&}
\end{equation*}
hence a def-equivalence $T\sim T_{\alpha}(\widehat{Q}_{\alpha},\overline{Q}_{\alpha},Q)$. Let us further observe that the deformations $\overline{Q}_{\alpha}$ are not all distinct up to isomorphisms: if we consider the Kuranishi space $T^1$ of any singularity of $\overline{Q}$, there is a well defined map $\C^3\rightarrow T^1$ whose image is 1-dimensional. This is enough to show that
$$\dim\left(\im\left(\lambda:\T^1_{\widehat{Q}}\rightarrow H^0(\overline{Q},R^1\phi_*\Theta_{\widehat{Q}}\right)\right)=1$$
giving $h^1(\Theta_{\overline{Q}})=17<18=h^1(\Theta_{\widehat{Q}})$, which is consistent with Proposition \ref{prop:small&simple}.
The further main invariants of the g.t. $T$ and the conifold t. $T_{\alpha}$ are listed in the following table:
\begin{equation*}
\begin{tabular}{lclclclclclclcl}
  \hline \hline
  Variety         &\quad     & $h^1(\Theta_{\bullet})$&\quad  & $b_2$&\quad
  &$\rho$&\quad  & $b_3$ &\quad & $b_4$&\quad  &$\chi$    \\
  \hline
    &&&&&&&\\
  $\widehat{Q},\widehat{Q}_{\alpha}$    &\quad    & 18 &\quad & 17 &\quad & 17  &\quad & 36 &\quad & 17 &\quad & 0 \\
  \hline
  &&&&&&&\\
  $\overline{Q}$            &\quad      &  17 &\quad & 1&\quad  & 1 &\quad & 60&\quad  & 17 &\quad & -40 \\
  \hline
  &&&&&&&\\
  $\overline{Q}_{\alpha}$            &\quad      &  18 &\quad & 1&\quad  & 1 &\quad & 120&\quad  & 17 &\quad & -100 \\
  \hline
  &&&&&&&\\
  $Q$               &\quad   &  101 &\quad & 1 &\quad & 1&\quad  & 204&\quad  & 1&\quad  & -200  \\
   \hline\hline
\end{tabular}
\end{equation*}
They can be computed from the well known invariants of the smooth quintic threefold $Q$ by means of relations given in \cite{R1}, Thm. 7.

\section*{Acknowledgments}

A first draft of this paper was written on the occasion of the GTM Seminar held in Genova on March 21-22, 2013, and dedicated to Prof.~D.~Gallarati for his 90th birthday.

First of all I would like to thank the organizers A.~Alzati, G.~Casnati, F.~Galluzzi, R.~Notari, M.E.~Rossi, G.~Valla and, last but not least, L.~Badescu and  M.~Bel\-tra\-met\-ti, for the warm welcome and hospitality and all the participants for the pleasant climate they were able to establish during the two days of work. Among them a special thanks to F.~Catanese for stimulating conversations and suggestions.

Finally I would like to thank R.~Davies for his helpful comments, C.~Casa\-gran\-de for having pointed out an inaccuracy in a previous version of Remark \ref{rem:fisica} and unknown referees for their useful suggestions and improvements.

\end{document}